\numberwithin{equation}{section}
\newtheoremstyle{fancy1}{10pt}{10pt}{\itshape}{12pt}{\textsc\bgroup}{.\egroup}{8pt}{
}
\newtheoremstyle{fancy2}{10pt}{10pt}{}{12pt}{\itshape}{.}{8pt}{ }
\theoremstyle{fancy1}
\newtheorem{lem}[equation]{Lemma}
\newtheorem{thm}[equation]{Theorem}
\newtheorem{main}{Theorem}
\newtheorem*{main*}{Theorem}
\newtheorem*{cor*}{Corollary}
\newtheorem*{prop*}{Proposition}
\newtheorem*{problem*}{Problem}
\theoremstyle{fancy2}
\newtheorem{rem}[equation]{Remark}
\newtheorem*{rem*}{Remark}
\newcommand{\cref}[1]{Corollary~\ref{#1}}
\newcommand{\lref}[1]{Lemma~\ref{#1}}
\newcommand{\rref}[1]{Remark~\ref{#1}}
\newcommand{\tref}[1]{Theorem~\ref{#1}}
\newcommand{\gt}{\theta}
\newcommand{\e}{\epsilon}
\newcommand{\RP}{\mathbb{R\mkern1mu P}}
\newcommand{\CP}{\mathbb{C\mkern1mu P}}
\newcommand{\Sph}{\mathbb{S}}
\newcommand{\Disc}{\mathbb{D}}
\newcommand{\C}{{\mathbb{C}}}
\newcommand{\R}{{\mathbb{R}}}
\newcommand{\Z}{{\mathbb{Z}}}
\newcommand{\SO}{\ensuremath{\operatorname{SO}}}
\newcommand{\U}{\ensuremath{\operatorname{U}}}
\newcommand{\SU}{\ensuremath{\operatorname{SU}}}
\renewcommand{\S}{\ensuremath{\operatorname{S}}}
\newcommand{\fg}{{\mathfrak{g}}}
\newcommand{\fk}{{\mathfrak{k}}}
\newcommand{\fh}{{\mathfrak{h}}}
\newcommand{\fm}{{\mathfrak{m}}}
\newcommand{\fn}{{\mathfrak{n}}}
\newcommand{\fa}{{\mathfrak{a}}}
\newcommand{\fp}{{\mathfrak{p}}}
\newcommand{\fl}{{\mathfrak{l}}}
\def\con#1=#2(#3){#1 \equiv #2 \bmod{#3}}
\newcommand{\ml}{\langle}                     
\newcommand{\mr}{\rangle}                    
\newcommand{\Ad}{\ensuremath{\operatorname{Ad}}}
\renewcommand{\sec}{\ensuremath{\operatorname{sec}}}
\DeclareMathOperator{\spam}{span}
\newcommand{\Kpm}{K^{\scriptscriptstyle{\pm}}}
\newcommand{\Kp}{K^{\scriptscriptstyle{+}}}
\newcommand{\Km}{K^{\scriptscriptstyle{-}}}
\newcommand{\Ko}{K_{\scriptscriptstyle{0}}}
\newcommand{\no}{\noindent}
\newcommand{\co}{{cohomogeneity}}
\newcommand{\coo}{{cohomogeneity one}}
\newcommand{\com}{{cohomogeneity one manifold}}
\newcommand{\pc}{positive curvature}
\newcommand{\hR}{\hat{R}}
\title{An exotic T$_1$S$^4$ with positive curvature}
\author{Karsten Grove}
\address{University of Notre Dame}
\email{kgrove2@nd.edu}
\author{Luigi Verdiani}
\address{University of Firenze}
\email{verdiani@math.unifi.it}
\author{Wolfgang Ziller}
\address{University of Pennsylvania}
\email{wziller@math.upenn.edu}
\thanks{ The first named author was supported in part by the
Danish Research Council and by a grant from the National Science
Foundation. The second named author was supported by GNSAGA. The
third named author was supported by a grant from the National
Science Foundation, and by  CNPq-Brazil. }
\begin{document}

\subjclass[2010]{Primary: 53C20 Secondary: 53C25, 57S15}

\keywords{positive curvature, connection metrics, cohomogeneity one,
3-Sasakian manifolds.}

\begin{abstract}
We construct a metric  with positive sectional curvature on a 7-manifold
which supports an isometry group with orbits of codimension 1.
It is a connection metric on the total space of an orbifold 3-sphere
bundle over an orbifold 4-sphere. By a result of S.~Goette, the manifold
is homeomorphic but not diffeomorphic to the unit tangent bundle of the
4-sphere.
\end{abstract}

\maketitle


Spaces of positive curvature play a special role in geometry.
Although the class of manifolds with positive (sectional) curvature
is expected to be relatively small, so far there are only a few
known obstructions. Moreover, for closed simply connected manifolds
these coincide with the known  obstructions to nonnegative curvature
which are: (1) the Betti number theorem of Gromov which asserts that
the homology of a compact manifold with non-negative sectional
curvature has an a priori bound on the number of generators
depending only on the dimension, and (2) a result of Lichnerowicz
and Hitchin implying that a spin manifold with non-trivial $\hat{A}$
genus or generalized $\fa$ genus cannot admit a metric with non
negative curvature.

\smallskip

One way to gain further insight is to construct and analyze
examples. This is quite difficult and has been achieved only a few
times. Aside from the classical rank one symmetric spaces, i.e., the
spheres and the projective spaces with their canonical metrics, and
the recently proposed deformation of the so-called Gromoll-Meyer
sphere \cite{PW2}, examples were only found in the 60's by Berger
\cite{Be}, in the 70's by Wallach \cite{Wa} and by Aloff and Wallach
\cite{AW}, in the 80's by Eschenburg \cite{Es1,Es2}, and in the 90's
by Bazaikin \cite{Ba}. The examples by Berger, Wallach and
Aloff-Wallach were shown, by Wallach in even dimensions \cite{Wa}
and by Berard-Bergery \cite{Bb} in odd dimensions, to constitute a
classification of simply connected homogeneous manifolds of positive
curvature, whereas the examples due to Eschenburg and Bazaikin
typically are non-homogeneous, even up to homotopy. All of these
examples can be obtained as quotients of compact Lie groups $G$ with
a biinvariant metric by a free isometric ``two sided" action of a
subgroup $H \subset G \times G$. Since a Lie group with a
biinvariant metric has nonnegative curvature so do such quotients,
and in rare cases one even gets positive curvature. To achieve this
no further curvature computations are required, it suffices to show
that any horizontal 2-plane, when translated back to the identity in
$G$, cannot contain two vectors whose Lie bracket is $0$. See
\cite{Z1} for a survey of the known examples.

\smallskip

Our main purpose here is to present a new method for the construction of
positively curved manifolds, and to use it to exhibit one new manifold with
 positive curvature (see \cite{De2} for an independent and different approach):

\begin{main}
There is a positively curved 7-manifold, which is homeomorphic, but not diffeomorphic to the unit tangent bundle of the 4-sphere.
\end{main}

Our result is actually stronger than stated: We exhibit an explicit metric $g$ and a 4-form $\eta$, and prove that the modified curvature operator $\hat R + \hat \eta$ on the bundle of 2-forms (automatically having the same ``sectional curvatures") is positive.  Recall that $\hat R$ itself being positive is extremely strong and only can happen for manifolds diffeomorphic to space forms \cite{BW}. The idea to consider such modified curvature operators was pioneered by Thorpe in dimension 4, and
implemented in higher dimensions by P\"{u}ttmann \cite{putmann}, where it was shown that all homogeneous positively curved metrics have \emph{strongly positive curvature} in this sense. It is the first time, however, that this method has been used to establish positivity of curvature in a new example.

The example is indeed a new one, since  $T_1\Sph^4$ is 2-connected
with third homotopy group $\Z_2$, and the only other known
2-connected positively curved  7-manifolds are  $\Sph^7$ and the
Berger space $B^7 = \SO(5)/\SO(3)$ with $\pi_3(B^7)=\Z_{10}$
\cite{Be}. It is a highly non-trivial and recent result due to S.
Goette \cite{G} that our new example is diffeomorphic to a
3-sphere bundle over the 4-sphere, and is homeomorphic but not
diffeomorphic to $T_1\Sph^4$ (see \cite{CE},\cite{KS}, and
\cite{Cr} for a proof that they are homeomorphic). Furthermore, from
\cite{KZ},\cite{To}, it follows that it is not diffeomorphic to any
biquotient. We point out that it is not yet known if $T_1\Sph^4$
itself has a metric of positive curvature, but P.Petersen and
F.Wilhelm have shown that it supports a metric with positive
curvature on an open and dense set \cite{PW1}.

\smallskip

Our example, is the second among an explicitly given infinite sequence $\{P_k\}, k = 1, 2, \ldots$ of 2-connected cohomogeneity one $\SO(4)$ 7- manifolds with $\pi_3(P_k) = \Z_k$ for which no obstructions to positive curvature are known (cf. \cite{GWZ}), the first being $P_1 = \Sph^7$.
By construction, the subaction by $\S^3 \subset \SO(4)$ on $P_k$ also yields the structure of an \emph{orbifold} principle $\S^3$- bundle over $\Sph^4$, and our metric on $P_2$ is an \emph{orbifold connection metric} for this bundle. Here the orbifold setting is crucial, since it is well known that a connection metric on a smooth
$\Sph^3$ bundle over $\Sph^4$ has positive curvature only in the
case of the Hopf bundle, where the total space is $\Sph^7$,
\cite{DR}.

\smallskip

In general, the attempt to describe and eventually classify
positively curved manifolds with large isometry group provides a
natural framework for a systematic search for new examples ( see
\cite{grove:survey},\cite{Wi2}). The manifold $P_2$ has indeed
emerged in this context: Specifically, in
\cite{verdiani:1,verdiani:2} and \cite{GWZ}  an exhaustive
description was given of all simply connected cohomogeneity one
manifolds that can possibly support an invariant metric with
positive curvature. In addition to the normal homogeneous manifolds
of positive curvature and a subset among the Eschenburg and Bazaikin
spaces which admit a cohomogeneity one action, two infinite
families, $P_{k}, Q_{k}$ and one exceptional manifold $R$, all of
dimension seven (with ineffective actions of $\S^3\times\S^3$),
appeared as the only possible new candidates (see \cite{GWZ} and the
survey \cite{Z2}). Here $Q_1$ is the normal homogeneous positively
curved Aloff-Wallach space (\cite{Wi1}). Recently it was shown  in
\cite{VZ2} that the exceptional candidate $R$ in fact does not admit
an invariant metric with positive curvature.

\smallskip

 It is a curious fact, as was proved in \cite{GWZ}, that the infinite families
admit a different description: They are the two-fold universal
covers, $P_{k} \to H_{2k-1}$ and $Q_{k} \to H_{2k}$ of the frame
bundle $H_{\ell}$ of self-dual 2-forms associated to the self dual
Einstein orbifolds $O_{\ell}$ ($\Sph^4$ with an $\SO(3)$ invariant orbifold metric) constructed by Hitchin in \cite{Hi1}.
As such, these manifolds come with natural 3-Sasakian metrics, that in particular are (orbifold) connection metrics. There is a general
necessary and sufficient condition for a connection metric to have positive curvature once the fiber is shrunk sufficiently (see \cite{CDR}), that also applies in the orbifold context. In the special case of 3-Sasakian metrics this is equivalent to the base having positive curvature.
Unfortunately the curvature of the Hitchin metrics are positive only
for $O_1$ and $O_2$.
However, on $O_3$ (the base of
$P_2$) this metric has positive curvature on a large region and only
relatively small negative curvature , see Figure 8 in \cite{Z2}.
This suggests that it might be possible to make a small change of
the Hitchin metric on $O_3$ with positive curvature, choose a
principal connection close to the Hitchin connection, and get
positive curvature on the total space after shrinking the metric on
the fiber sufficiently.
We use the Hitchin metric and connection as a guide only.  Our metric on the base, and the
principal connection, are explicitly given by polynomials. For this
we divide the interval on which the metric is defined into three
subintervals, two close to the singular orbits, and a larger one in
the middle. Near the singular orbits we find functions  consisting
of polynomials of degree  $3$. In the middle we glue with the unique
polynomials of degree  $5$ such that the resulting metric on the
manifold is $C^2$ (See \eqref{vfunctions} and \eqref{hfunctions} for
the explicit formulas). It is then obvious that any smooth $C^2$
perturbation will have positive curvature as well. To prove that our metric has positive curvature (on each piece), the crucial and non-trivial point is to find and
add an invariant 4-form so as to make the modified curvature operator positive
definite when the fiber metric is shrunk sufficiently. To prove
positive definiteness, given our choices, boils down to checking
that specific polynomials with integer coefficients have no zeroes
on a particular closed interval. This is done by using Sturm's
theorem, which counts real zeroes of such polynomials by computing
the $\gcd$ of the polynomial and its derivative (i.e. applying the
Euclidean algorithm).

The metric by O.Dearricott in \cite{De2} differs from ours in that
he deforms the self dual Hitchin metric  on the base of the orbifold
bundle conformally, but keeps the principal connection as the one
coming from the Hitchin metric.

\smallskip

It is a natural conjecture that all the manifolds $P_{k}$ and
$Q_{k}$ admit invariant metrics of positive curvature. This would be
particularly interesting for the $P_{k}$ family, since they are all
2-connected, hence contradicting a conjecture in
\cite{fang-rong:finiteness}. This requires a more drastic change of
the Hitchin metrics on the base and hence difficulty in a natural
choice of principal connection using our method. It is not difficult
to construct invariant metrics of positive curvature on the base
(using, e.g., Cheeger deformations), but corresponding choices of
principal  connections will require new insights. We point out that
the class of connection metrics, while simpler to work with
geometrically, is considerably smaller than the class of general
invariant metrics. In particular, we will show that the
manifold $B^7$, although it admits a cohomogeneity one metric with positive curvature, does not  admit a connection metric with positive curvature.

\smallskip

Here is a short description of the individual sections. In Section 1
we describe the $P_k$ as well as the $Q_k$ families
 including all invariant metrics on them in terms
 of functions on the orbit space interval. The imposed boundary
 conditions for these functions and general curvature formulas are easily obtained and described in the Appendix. Section 2 is
 devoted to a discussion of connection metrics in our context
 and the corresponding simplified curvature formulas and
  smoothness conditions.  A discussion of the Thorpe
    method and how to choose a suitable invariant 4-form is discussed in Section 3, and  Section 4 describes the metric and the principal connection.  The proof  that the constructed metric
        and chosen 4-form has positive definite ``curvature
        operator" is carried out in Section 5.

\smallskip

The present paper is a
minor modification of \cite{GVZ}, first made available on the arXiv  with
a different title.

\smallskip

It is a pleasure to thank Burkhard Wilking for helpful discussions
and Peter Storm for suggesting the use of Sturm's theorem in our
proof. The second and third named author were also supported
   by IMPA in Rio de Janeiro
    and would like
   to thank the Institute for its hospitality.

\bigskip

\section{Candidates and their invariant metrics}

\bigskip

To establish notation, we begin with a brief review of the basic description of cohomogeneity one manifolds and their invariant metrics (for more details, we refer
to  \cite{AA,GZ1,GWZ}).

Let $G$ be a compact Lie group which acts isometrically on a compact
Riemannian manifold $M$ with orbit space an interval. The interior
points of the interval correspond to the principal orbits, and the
end points to the non-principal orbits $B_\pm$ (singular in the case
of simply connected $M$). Let $c: [0,L] \to M$ be a distance
minimizing geodesic parameterized by arclength connecting the
non-principal orbits. The isotropy group at $c(0)$ is denoted by
$\Km$ and the one  at $c(L)$ by $\Kp$. The  principal isotropy
group, constant for $c(t), 0<t<L$, is denoted by $H$. Since the
boundary of tubes around the singular orbits must be regular orbits,
we have that $\Kpm/H$ are spheres.

An important property of \com s is that a converse also holds:  If
we have compact groups with inclusions $H\subset \{\Km ,
\Kp\}\subset G$ satisfying $\Kpm/H=\Sph^{\ell_\pm}$, then one can
define a \com\ by gluing the two disc bundles
$G\times_{\Km}\Disc^{\ell_{-}+1}$ and
$G\times_{\Kp}\Disc^{\ell_{+}+1}$ along their common boundary $G/H$
via the identity. One possible description of our manifold is thus
simply in terms of the \emph{diagram} of groups $H\subset \{\Km ,
\Kp\}\subset G$.

\smallskip

To describe a $G$ invariant metric on $M$, it suffices to
describe the metric along  $c$. For $0<t<L$,
 $c(t)$ is a regular
 point  with constant isotropy group $H$ and the metric on the
 principal
 orbits $G c(t)=G/H$ is a smooth family of homogeneous metrics
 $g_t$.
 Thus on the regular part
 the metric $\langle\ ,\ \rangle _{c(t)} = g_{c(t)}$ is determined by
$$g_{c(t)}=d\,t^2+g_t,$$
and since the regular points are dense it also describes the metric
on $M$. In terms of a fixed biinvariant inner product $Q$ on the Lie algebra $\fg$ and corresponding $Q$-orthogonal splitting $\fg=\fh\oplus\fm$ we have $\Ad(H)(\fm)\subset\fm$ and the tangent space to
$G/H$ at $c(t), t\in(0,L)$ is identified with $\fm$ via action fields: $X\in\fm\to
X^*(c(t))$. With this terminology the metric  $g_t$ is an $\Ad(H)$-invariant inner product on $\fm$. In terms of $Q$ we also have the representation
$$g_t(X^*,Y^*) = Q(P_t(X),Y)$$
 where $P_t: \fm \to \fm$ is a positive, symmetric $\Ad(H)$
 equivariant operator for each $t \in (0,L)$.
 When extended to the closed interval $0 \le t \le L$,
 $g_t$ degenerates at the end points, and smoothness
 of the metric on $M$, correspond to explicit
 \emph{boundary conditions} for $g_t$ at $0$ and
 at $L$ imposed by invariance (cf. \cite{BH},\cite{EW}).

\bigskip

We will now recall the explicit description of our specific
candidates from \cite{GWZ} in terms of group diagrams as above, and
use it to describe all smooth invariant metrics on them.

\bigskip

\begin{center}
 \emph{Metrics on the $P$ family.}
 \end{center}

\bigskip

 Regarding $\S^3$ as the unit quaternions,
 the group diagram for $P_k$ is given by:
\begin{equation}\label{Pgroups}
H=\Delta Q \subset  \{ (e^{i\gt},e^{i\gt})\cdot H \, ,
(e^{j(1+2k)\gt},e^{j(1-2k)\gt})\cdot H\}\subset \S^3\times \S^3,
\end{equation}
where $H$ is isomorphic to the quaternion group $Q=\{ \pm1 , \pm i,
\pm j, \pm k\}$, embedded diagonally in $\S^3\times \S^3$. The signs
of these slopes differ from the ones in \cite{GWZ}, but do not
affect the equivariant diffeomorphism type since we can conjugate
all groups by $(1,i)$. This change simplifies the smoothness conditions.

\bigskip

Since $H$ is finite in our case, $\fm= \fh^{\perp} = \fg$ with the notation above. For a basis of $\fg$ we let
 $X_i$ and $Y_i$ be the  left invariant vector fields on $\S^3 \times
\S^3$
 corresponding to $i,j$ and $k$ in the Lie
 algebras
of the first and second $\S^3$ factor of $G$. The adjoint action of
$H$ is in our case by sign changes in the basis vectors $X_i,Y_i$.
For example, $\Ad(i,i)$ fixes  $X_1$ and $Y_1$ and multiplies $X_2,
Y_2 , X_3, Y_3$ by $-1$, and similarly for $(j,j), (k,k)\in H$. This
implies in particular that
$$ \langle X^*_i,X^*_j \rangle  =  \langle X^*_i,Y^*_j \rangle
  =  \langle Y^*_i, Y^*_j \rangle  = 0 \text{ for all } i \ne
j$$
The metric is therefore described by 9 functions:
\begin{equation}\label{functions}
f_i(t) = \  \langle X^*_i,X^*_i \rangle_{c(t)}\quad,\quad
 g_i(t) = \   \langle Y^*_i, Y^*_i \rangle_{c(t)}
\quad,\quad h_i(t)= \langle X^*_i,Y^*_i \rangle_{c(t)}
\end{equation}
 all defined on $[0,L]$.

 \bigskip

\bigskip

\begin{center}
 \emph{Metrics on the $Q$ family.}
 \end{center}

\bigskip

For completeness, we now shortly discuss the second family of
candidates. The group diagram for $Q_k$ is given by
\begin{equation}\label{Qgroups}
H\simeq \Z_2\oplus\Z_4 =\{(\pm 1, \pm 1), (\pm i, \pm i)\}\subset \{
(e^{i\gt},e^{i\gt})\cdot H \, , (e^{j(k+1)\gt},e^{-jk\gt})\cdot
H\}\subset \S^3\times \S^3.
\end{equation}

We point out that in this case, since $H$ is smaller than for
$P_k$, a general \coo\ metric can have other non-zero inner products
of the basis vectors $X_i,Y_i$, but the curvature formulas in the
general case are significantly more complicated.
Moreover, metrics lifted from the corresponding
 $\Z_2$ quotients $H_{\ell}$  are of the above form.

\bigskip

Additional strong restrictions on the functions defining the metric
on $P_k$ or $Q_k$ are imposed at the end points of the interval
$[0,L]$, where the principal orbits collapse
 and is carried out for our candidates in the Appendix (see Theorem \ref{gen smooth}).
The curvature tensor of a general cohomogeneity one metric on $P_k$
and $Q_k$ is easily obtained from known formulas and is discussed in the
Appendix as well (see Theorem \ref{gencurv}).

\bigskip

\section{Connection Metrics}

\bigskip

We now restrict our general type of \coo\ metrics to so-called
connection metrics. This will simplify the curvature formulas
significantly (in particular when the vertical part of the metric is
scaled by $\epsilon$), but also  enables one to understand the
behavior of the functions in a more geometric fashion.

\bigskip

In general, when $G$ contains a normal subgroup $L \triangleleft G$ which acts freely (or almost freely) on $M$ the quotient map $\pi: M \to M/L$ is a principal (orbifold) $L$ bundle over the $G/L$ cohomogeneity one (orbifold) base $B = M/L$. In this case, a subfamily of invariant metrics are \emph{connection metrics}, i.e., metrics of the form

\begin{equation}\label{connectionmetric}
 \langle U,V \rangle  = g_B(\pi_*(U), \pi_*(V)) +
  Q(\theta(U), \theta(V))
\end{equation}

\no where  $g_B$ is a ($G/L$ invariant) metric on the base
 $B$,  $ Q$ a  bi-invariant metric on $\fl$, and $\theta$
 a (G invariant) connection, i.e., an invariant
  choice of a complement to the tangent spaces of the
   $L$-orbits. Note that one gets a natural family of
   such metrics simply by scaling $Q$ by $\epsilon$.

\smallskip

In our case, the above discussion applies to
$L=\S^3\times\{1\}\subset \S^3\times\S^3 = G$. Indeed, since $L$ is
a normal subgroup of $G$, the isotropy groups of $L$ are simply the
intersection of $L$ with the isotropy groups along $c(t)$. For $P_k$
the isotropy groups are hence trivial on $B_-$ and the principal
orbits, and along $B_+$
 equal to $\{e^{j(1+2k)\gt}\mid e^{j(1-2k)\gt}=1\}\simeq\Z_{2k-1}$.
We thus have an \emph{orbifold principal} $\S^3$ bundle, $\pi: P_k
\to P_k/\S^3\times\{1\} = B$. The base $B$ carries a \co\ action
induced by $\{1\}\times \S^3$ since it commutes with $L$. Its
isotropy groups are $e^{i\gt}\cdot H\  , \ e^{j\gt}\cdot H\ , $
where $ H=\{\pm 1,\pm i,\pm j,\pm k\}$. This action is $\Z_2$
ineffective, and the corresponding effective action by $\SO(3)$ is
in fact
 the remarkable cohomogeneity one action on $\Sph^4$, whose
 extension to $\R^5$, viewed as the symmetric traceless
  $3\times3$ matrices, is by conjugation, see e.g. \cite{GZ1}.
   Each of the two singular orbits are the
    so-called Veronese surfaces $\RP^2 \subset \Sph^4$.
      The metric on $B$ is smooth except along the right
       singular orbit $B_+/\S^3 = \RP^2$ where the
        ``normal bundle" has fibers that are Euclidean
        cones over circles of length $2\pi/(2k-1)$.

\smallskip

Similarly, from the isotropy groups of the $\S^3\times\S^3$ action
on $Q_k$ in \eqref{Qgroups}, it again follows that
$\S^3\times\{1\}\subset \S^3\times\S^3 $ acts almost freely with
isotropy groups along $B_+$ equal to $\{e^{j(k+1)\gt}\mid
e^{jk\gt}=1\}\simeq\Z_k$, and trivial otherwise. In this case, the
base $Q_k/\S^3\times\{1\}$ has an induced action by $\S^3$ with
isotropy groups  $e^{i\gt}\cdot H\ , \ e^{j\gt}\cdot H\ , $ where $
H=\{\pm 1,\pm i\}$. This action is again  $\Z_2$ ineffective and the
induced action by $\SO(3)$ has the same isotropy groups as the
action of $\SO(3)\subset\SU(3)$ on $\CP^2$, see e.g. \cite{Z2}. The
metric on the base $\CP^2$ is also smooth everywhere in this
 case, except along the right singular orbit where the normal
 spaces are cones on circles of length $2\pi/k$.

\smallskip

To make the discussion of $P_k$ and $Q_k$ more uniform, we can
further compose the projection $\pi: Q_k \to Q_k/\S^3\times\{1\}
=\CP^2$ with the two fold branched cover $\CP^2\to\Sph^4$ obtained
orbitwise from the respective $\SO(3)$ actions. From the above
description of the  isotropy groups of these actions one sees that
this is a 2-fold cover along the principal orbits and the left hand
side singular orbit. But along the right hand side singular orbit it
is a diffeomorphism, which can thus be considered to be the
branching locus. Orthogonal to this singular orbit it divides
angles by $2$. Thus we can also regard $Q_k$ as an orbifold
principal bundle over $\Sph^4$ with angle normal to $B_+$ equal to
$2\pi/(2k)$. As we will see shortly, we will then be able  to deal
with $P_k$ and $Q_k$ at the same time.

\bigskip

We now claim that the  cohomogeneity one metrics from Section 1
with
$$f = f_i: = \  \langle X^*_i,X^*_i \rangle
\quad \text{all \emph{constant} and equal    }
$$
in fact are connection metrics for the principal $L$-bundle
$\pi\colon M\to B$. Indeed, the horizontal space is invariant under
$L$ by definition. For inner products along orbits we have $ \langle
A^*, B^* \rangle _{g \pi(c(t))} = \langle \Ad(g^{-1}) A, \Ad(g^{-1})
B \rangle _{\pi(c(t))}$ for $A,B\in\fg$.
 Since $f_i = f$ is constant
and the $\{1\} \times\S^3$ action commutes with the $\S^3 \times
\{1\}$ action we see that   $ \langle X^*_i, X^*_j \rangle $ are
constant along $\S^3 \times \S^3$ orbits as well as along $c$. In
particular, the $X^*_i$ are orthogonal everywhere, with constant
length $\sqrt f$. Setting $f=\e$, the metric is a connection metric
as in \eqref{connectionmetric} scaled by $\epsilon$.

\smallskip

 The \emph{vertical} space $\mathcal{V}$ at any point of an
$\S^3\times \{1\}$ orbit is spanned by the $X^*_i$, i.e.,
$$\mathcal{V} = \spam\{X^*_i\}$$
For the \emph{horizontal} space $\mathcal{H}$  we thus have:
$$\mathcal{H} = \spam\{T, V_i\}, \ \text{where} \ T:=c'(t) \ \text{and} \  V_i :=
Y^*_i - \sum_j \frac{1}{f}  \langle Y^*_i, X^*_j \rangle  X^*_j$$

Note that since $ \langle Y^*_i, X^*_j \rangle $ are not constant
along either $\S^3$ orbit, the vector fields $V_i$ are not action
fields. But along $c$ the vector fields $V_i$ are orthogonal with:

  $$V_i = Y^*_i - \frac{h_i}{f}X^*_i\ \ ,\ \  \langle
  V_i,V_i \rangle
   = g_i -\frac{h_i^2}{f} = \frac{fg_i -
h_i^2}{f}=: v_i^2$$

The second $\S^3=\{1\}\times\S^3 $  induces an action on the
quotient $B$ and for the induced basis $i,j,k$ we denote the action
fields by $W^*_i$.  Then $V_i$ are the \emph{horizontal lifts} of
$W^*_i$ and hence

$$\langle W^*_i,W^*_i \rangle =v_i^2 \ , \ \langle W^*_i,W^*_j \rangle =0
\ \text{ for } \ i\ne j$$

\smallskip

We now define the unit vectors
$$Z_i=W_i^*/|W_i^*|  \text{ and their horizontal
lifts } \bar{Z}_i=V_i/|V_i|.$$ From now on all curvatures will be
expressed in terms of the unit vectors $T$, $Z_i$ and $\bar{Z}_i$ (and the vectors $X^*_i$).
Notice that these are well defined (along the normal geodesic
$c(t)$) even at the singular orbits and hence all curvature
conditions hold on all of $M$.

\smallskip

The data $(f,h_i,v_i)$ completely describe the metric since we can
recover $g_i$ via $g_i=v_i^2 +h_i^2/f$. We want to {\it scale} the
metric in direction of the fibers by an amount $\e$, keeping the
horizontal space and the metric on the base the same. We claim that
this corresponds to:

\begin{equation}\label{scaling}
(f,h_i,v_i) \to (\e f, \e h_i, v_i) \text{ and }
g_i\to v_i^2+\e\frac{h_i^2}{f}=g_i-(1-\e)\frac{h_i^2}{f}
\end{equation}

Indeed, we then have in the new metric

$$\langle X_i,X_i\rangle = \e f \ , \ \langle X_i,V_i\rangle =\langle X_i,Y_i-\frac{h_i}f X_i\rangle =\e h_i-\frac{h_i}f \e f=0$$
and
$$\langle V_i,V_i\rangle =\langle Y_i-\frac {h_i}f X_i,Y_i-\frac{h_i}f X_i\rangle
=v_i^2+\e\frac{h_i^2}{f} -2\frac{h_i}{f}\e h_i+\frac
{h_i^2}{f^2}\e f =v_i^2$$

Since we want to study conditions for \pc\ under the assumption that
$\e\to 0$, it does not matter where we start, and we will thus set
$f=1$ from now on. Then the metric is described by the functions
$(1,h_i,v_i)$ and is changed to $(\e,\e h_i,v_i)$ under scaling.

\smallskip

In this language, our new example of positive curvature is described
by the formulas in \eqref{vfunctions} for the $v_i$ functions and
\eqref{hfunctions} for the $h_i$ functions.

\bigskip

For the connection form  $\theta$ we have
\begin{equation}\label{connection}
\theta(X^*_i) =  X_i \ \ ,\ \
\theta(V_i) =\theta(T)= 0 \ \text{ and thus }\ \ \theta(Y^*_i) = h_i
X_i \end{equation}
 Thus the functions $h_i$ can be considered to be the principal
connection whereas the $v_i$'s represent the metric on the base.

\bigskip

\begin{center}
 \emph{Smoothness of connection metrics.}
 \end{center}

\bigskip

We now describe the smoothness of the metric in terms of $v_i$ and
$h_i$. For this we unify the description of $P_k$ and $Q_k$, by
regarding each as an orbifold principal bundle over $\Sph^4$ as
above. The metric on the base, which we denote by $O_\ell$, has an
orbifold singularity normal to the Veronese surface with angle
$2\pi/\ell$, as in the case for the Hitchin metric. Thus $\ell=2k-1$
 gives rise to a metric on $P_k$ and $\ell=2k$ one on $Q_k$. The
 metric we construct will only be $C^2$, and the smoothness
 conditions are given by:

\begin{thm}\label{smooth conn1} If $\ell >2$, a
 connection metric, described by the
functions $v_i(t)$ on the base $O_\ell$,  and the principal
connection $h_i(t)$, is $C^2$ if and only if:
\begin{align*}
&v_1(0)=0\ ,\ v_1'(0)=4 \ ,\ v_1''(0)=0 \ , \  v_2(0)=v_3(0)
\  ,  \  v_2'(0)=-v_3'(0)
\  , \  v_2''(0)=v_3''(0)\\
 &  v_2(L)=0\;  , \;  v_2'(L)=-4/\ell \;  ,  \; v_2''(L)=0 \; , \; v_1(L)=v_3(L)
 \; , \;   v_1'(L)=v_3'(L)=0
\;  ,  \; v_1''(L)=v_3''(L)
 \end{align*}
and the principal connection satisfies:
 \begin{align*}
&\qquad h_1(0)=-1\ ,\ h_1'(0)=0 \ ,\   h_2(0)=h_3(0)
\  ,  \  h_2'(0)=-h_3'(0)
\  , \  h_2''(0)=h_3''(0)\\
 &  h_2(L)=\frac{\ell+2}{\ell}\;  , \;  h_2'(L)=0 \;  ,  \; h_1(L)=h_3(L)=0
 \; , \;   h_1'(L)=-h_3'(L)
\;  ,  \; h_1''(L)=h_3''(L)=0
 \end{align*}
\end{thm}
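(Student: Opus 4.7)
The plan is to derive these conditions by specializing the general cohomogeneity-one smoothness theorem in Appendix 1 (\tref{gen smooth}) to connection metrics, and then rewriting the resulting conditions in the variables $(v_i, h_i)$. The connection-metric assumption amounts to $f_i \equiv 1$ (the overall scale $\epsilon$ being irrelevant for smoothness), and the change of variables is governed by the identity $g_i = v_i^2 + h_i^2$, whose derivatives
\begin{equation*}
g_i' = 2 v_i v_i' + 2 h_i h_i', \qquad g_i'' = 2(v_i')^2 + 2 v_i v_i'' + 2(h_i')^2 + 2 h_i h_i''
\end{equation*}
allow every condition on the $2$-jet of $g_i$ at an endpoint to be transferred to one on the $2$-jets of $v_i$ and $h_i$.

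The two singular orbits are then treated separately. At $t=0$ the isotropy $\Kmo$ contains the diagonal circle $(e^{i\theta},e^{i\theta})$, so the collapsing direction at the left singular orbit is spanned by a linear combination of $X_1^*$ and $Y_1^*$, while the index pairs $(2,3)$ remain non-degenerate and are linked by the Weyl reflection $w_-$ of \lref{Weyl}. Applying \tref{gen smooth} to the $(f_1, g_1, h_1)$ triple forces the degeneracy relations $h_1(0) = -1$ and $h_1'(0)=0$, and then substituting $g_1 = v_1^2 + h_1^2$ together with the slope data extracted from the slice representation yields $v_1(0)=0$, $v_1'(0)=4$, $v_1''(0)=0$. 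The coefficient $4$ reflects the geometry of the base collapse: the $G/L$-action at $t=0$ collapses the generic $3$-sphere onto the Veronese surface $\RP^2 \subset \Sph^4$ via the $\SO(3)$-action on traceless symmetric matrices (Section~2), and it is precisely this degree-$2$ branching that doubles the naive slope. The parity conditions on $(v_2,v_3)$ and $(h_2,h_3)$ are immediate from $w_-$-invariance of the $C^2$-jet.

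At $t=L$ the analysis runs in parallel but with the crucial modification that $\pi\colon M \to B$ is only an \emph{orbifold} principal bundle: the isotropy of the $\S^3\times\{1\}$-action along $B_+$ is $\Z_\ell$, so the base $O_\ell$ has a cone-angle $2\pi/\ell$ singularity normal to $B_+/L$. The smoothness conditions are therefore obtained by first lifting a tubular neighborhood of $B_+$ to its $\ell$-fold orbifold cover, where \tref{gen smooth} applies honestly, and then descending. The descent rescales the normal coordinate by $\ell$, which produces the factors in $v_2'(L)=-4/\ell$ and $h_2(L)=(\ell+2)/\ell = 1 + 2/\ell$; here the $1$ is the analogue of $|h_1(0)|$ from the smooth end, while the extra $2/\ell$ is the orbifold correction arising from the holonomy of the principal connection around the singular fiber. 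The parity relations at $t=L$ are again forced by the Weyl reflection $w_+$. The main obstacle is precisely this orbifold descent at $t=L$: the interaction between the principal connection $\theta$ (see \eqref{connection}) and the $\Z_\ell$ quotient must be tracked carefully to verify that the higher-order conditions $h_2'(L)=0$, $h_1''(L)=h_3''(L)=0$ and $v_2''(L)=0$ survive unchanged, which is done by writing $\theta$ explicitly in equivariant coordinates on the cover and Taylor-expanding in the normal direction.
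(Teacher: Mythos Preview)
Your opening move --- set $f_i\equiv 1$, use $g_i=v_i^2+h_i^2$, and read off $2$-jet conditions from \tref{gen smooth} --- is exactly the paper's proof. But your execution at $t=L$ goes astray. You write that \tref{gen smooth} only ``applies honestly'' after lifting a tubular neighborhood of $B_+$ to an $\ell$-fold orbifold cover and then descending. This is a misconception: \tref{gen smooth} is a statement about the metric on the \emph{total space} $M$, which is a genuine smooth manifold at both singular orbits; the orbifold structure lives only on the quotient $B=M/L$ and plays no role in the smoothness of $g$ on $M$. One applies \tref{gen smooth} directly at $t=L$ with the slopes of $\Kp$ in the $j$-direction, namely $(p,q)=(2k+1,1-2k)$ for $P_k$ and $(p,q)=(k+1,-k)$ for $Q_k$. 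The relation $pf_2(L)=-qh_2(L)$ from part (a) then gives $h_2(L)=(\ell+2)/\ell$ immediately, and the second-derivative relation there yields $v_2'(L)=-4/\ell$; no lifting, descent, or ``holonomy correction'' is involved.

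The one substantive point you omit is what the paper singles out: the integer $k$ in \tref{gen smooth} is the order of the ineffective kernel $H\cap K^0$ of the slice action, and this equals $4$ for the $P$ family and $2$ for the $Q$ family. That number, together with the respective slopes, is precisely what produces the constant $4$ in $v_1'(0)=4$ and the factor $4/\ell$ at $t=L$, and it is the reason the two families yield identical conditions when expressed through $\ell$. Your geometric narrative about Veronese branching and orbifold rescaling is an interpretation of these constants after the fact, not a derivation. Finally, the parity relations linking indices $2$ and $3$ (and $1$ and $3$) at the endpoints are obtained in the paper from part (b) of \tref{gen smooth} --- i.e.\ from the exponents $4|p|/k$, $4|q|/k$, $2|p\pm q|/k$ on the non-collapsing modules --- rather than from the Weyl reflections, which encode a global symmetry of the metric and give the necessary direction but not sufficiency.
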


\begin{proof}
Using $ g_i=v_i^2+h_i^2 $ and $f_i=1$, this easily follows from the
general smoothness conditions in \tref{gen smooth}. Notice though
that the ineffective kernel for the action of $\Kpm$ on the normal
sphere is $\Z_4$ for the $P_k$ family  and $\Z_2$ for the $Q_k$
family. Due to this fact, the smoothness conditions take on the same
form.
\end{proof}

{\it Remark.}
A crucial difference between $\ell=1$ and $\ell>1$ in this
language is  that $v_1'(L)=0$ is necessary when $\ell>1$, but not when
$\ell=1$.
For $\ell=2$, the smoothness conditions are  as stated in
\tref{smooth conn1}, except that $h_1''(L)= 0$ is not required.
Thus the simple expressions for the functions of the positively curved metrics on
$P_1=\Sph^7$ and and the Aloff Wallach space $P_2$ (see \cite{Z2}) cannot be a guide anymore for what a positively curved metric should look like for $\ell>2$.

\bigskip

\begin{center}
 \emph{Curvature of connection metrics.}
 \end{center}

\bigskip

In the remainder of the paper, the metric $\ml \cdot , \cdot \mr $
denotes the $\e$-scaled metric on the total space, as well as the
induced metric on the base.

\smallskip

For the curvature formulas of a connection metric  it turns out to
be useful to introduce the following abbreviations. For the
curvature on the base we set:

\begin{align}\label{curvbase}
L_k:=\ml R_B(Z_k,T)Z_k,T\mr&=-\frac{v_k''}{v_k}\notag \\
M_k:=\ml R_B(Z_i,Z_j)Z_i,Z_j\mr&=\frac{2v_k^2(v_i^2 + v_j^2) -
3v_k^4 + (v_i^2 - v_j^2)^2}{v_i^2v_j^2v_k^2}
-\frac{v_i'}{v_i}\frac{v_j'}{v_j}\\
N_k:=\ml R_B(Z_i,Z_j)Z_k,T\mr&=-2\frac{ v_k'}{v_iv_j} +
\frac{v_i'}{v_i}\frac{v_i^2 + v_k^2 - v_j^2}{v_iv_jv_k} +
\frac{v_j'}{v_j}\frac{v_j^2 +
v_k^2 - v_i^2}{v_iv_jv_k}\notag
\end{align}
where   $(i,j,k)$ is a cyclic permutation of $(1,2,3)$. Notice in
particular that the most basic property a positively curved metric
on the base must satisfy is that the functions $v_i$ have to be
concave. For the principal connection we set:

\begin{align}\label{abbreviations}
 \alpha_i=\frac{h_i'}{2v_i}\quad &,\quad  \beta_{i}
=-\frac{h_i+h_jh_k}{v_jv_k}\notag \\
A_{ij}= (\beta_k,\beta_i,\alpha_i)&\cdot ( 2 \frac{h_j}{v_j},
\frac{v_k^2+v_i^2-v_j^2}{v_iv_jv_k}, \frac{v_j'}{v_j})\\
B_{ij}= (\alpha_k,\alpha_i,\beta_i)&\cdot ( 2
\frac{h_j}{v_j},\frac{v_k^2+v_i^2-v_j^2}{v_iv_jv_k},
\frac{v_j'}{v_j}).\notag
 \end{align}
With this terminology we can now state.

\begin{thm}\label{curv conn}
The curvature tensor of a connection metric, scaled by $\e$ in the
direction of the fibers, is given by
\begin{align*}
R_{X^*_i \bar{Z}_iX_i^* \bar{Z}_i}&=\epsilon^2\alpha_i^2\ ,
&\ \ R_{X_i^*X_j^*X_k^*T}&=0\\
R_{X_i^* \bar{Z}_iX_j^*\bar{Z}_j}&= \,\epsilon \beta_k
-{\epsilon^2} \beta_i\beta_j\  , & R_{X_i^*X_j^*\bar{Z}_kT}&=
-2\epsilon\alpha_k +{\epsilon^2}\left(\alpha_i\beta_j
+\alpha_j \beta_i\right) \\
R_{X_i^* X_j^*X_i^*X_j^*}&=\epsilon\ , &
R_{X_i^*\bar{Z}_jX_k^*T}&=\epsilon\alpha_j
-\epsilon^2 \alpha_i\beta_k \\
R_{X_i^* X_j^*X_i^*\bar{Z}_j}&=0\ ,
&  R_{X_i^*\bar{Z}_j\bar{Z}_kT}&={\epsilon}B_{ij}  \\
R_{X_i^*X_j^*\bar{Z}_i\bar{Z}_j}&=2 \epsilon \beta_k
-\epsilon^2 \left( \beta_i\beta_j\,+\alpha_i\alpha_j \right)\ , &
R_{\bar{Z}_i\bar{Z}_jX_k^*T}
&=-{\epsilon} (B_{ij} +B_{ji}  ) \\
R_{X_i^*\bar{Z}_jX_i^*\bar{Z}_j}&={\epsilon^2} \beta_i^2\ ,
& R_{\bar{Z}_i\bar{Z}_j\bar{Z}_kT}&=N_k
 +\epsilon\cdot \gamma \\
R_{X_i^*\bar{Z}_jX_j^*\bar{Z}_i}&=-\epsilon \beta_k+\epsilon^2
\alpha_i\alpha_j\ ,
 &\ \ R_{X_i^*TX_i^*T}&=\epsilon^2 \alpha_i^2   \\
R_{\bar{Z}_i\bar{Z}_jX_i^*\bar{Z}_j}&=-\epsilon A_{ij} \ ,
&\ \ R_{X_i^*T\bar{Z}_iT}&=- {\epsilon}  \alpha_i'  \\
R_{\bar{Z}_i\bar{Z}_j\bar{Z}_i\bar{Z}_j}&=M_k -3 \epsilon \beta_k^2\ ,
&\ \ R_{\bar{Z}_iT\bar{Z}_iT}&=L_i -3\epsilon \alpha_i^2  \\
\end{align*}
where $i,j,k$  is a cyclic permutation of $(1,2,3)$. All other
components of the curvature tensor are equal to $0$.
\end{thm}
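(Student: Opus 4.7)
The plan is to derive Theorem \ref{curv conn} via the O'Neill formulas for the Riemannian orbifold submersion $\pi\colon M\to B$, or equivalently by specializing the general cohomogeneity one curvature formulas of Theorem \ref{gencurv} to $f_i\equiv\epsilon$ and rewriting the result via $g_i=v_i^2+h_i^2$ in the frame $\{X_i^*,T,\bar Z_i\}$. The structural reason the formulas are so clean is that the fiber metric $\epsilon Q|_{\fl}$ is bi-invariant on $L=\S^3\times\{1\}$, so the fibers are totally geodesic and the O'Neill $T$-tensor vanishes identically. Consequently every curvature component is controlled by three ingredients: the base curvature $R_B$ in $\{T,Z_i\}$ (recorded in \eqref{curvbase}), the fiber curvature of $\S^3$ with the $\epsilon$-scaled bi-invariant metric (yielding $R_{X_iX_jX_iX_j}=\epsilon$ and forcing the three-vertical-one-horizontal components to vanish), and the O'Neill $A$-tensor with its covariant derivative.

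The first calculation is the $A$-tensor itself. Since $A_EF=\tfrac12[E,F]^{\mathcal V}$ for horizontal $E,F$, and $\theta([E,F])=-d\theta(E,F)$ off of horizontal vectors, one reads the vertical bracket from the connection form \eqref{connection} and the decomposition $\bar Z_i=(Y_i^*-h_iX_i^*)/v_i$. A direct computation using the brackets of the left-invariant fields $Y_i$ shows that $\theta([\bar Z_i,\bar Z_j])$ contributes exactly $\gamma_k=-(h_k+h_ih_j)/(v_iv_j)$ and $\theta([T,\bar Z_i])$ contributes $\beta_i=h_i'/(2v_i)$; this is the geometric origin of the abbreviations in \eqref{abbreviations}. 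The purely horizontal entries then follow from $R_{\bar X\bar Y\bar X\bar Y}=R^B_{XYXY}-3|A_{\bar X}\bar Y|^2$ and its polarization, the explicit $\epsilon$ factor in $-3\epsilon\gamma_k^2$ and $-3\epsilon\beta_i^2$ coming from measuring the vertical bracket in the $\epsilon$-scaled fiber metric.

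The mixed entries come from the two standard O'Neill identities
\[
R(U,\bar X,V,\bar Y)=\langle(\nabla_U A)_{\bar X}\bar Y,V\rangle+\langle A_{\bar X}U,A_{\bar Y}V\rangle-\langle A_{\bar X}V,A_{\bar Y}U\rangle
\]
and its (3 horizontal, 1 vertical) companion. The $L$-invariance of the base metric and connection forces $\nabla_U A\equiv 0$ whenever $U$ is vertical, so the two-vertical entries reduce to quadratic $A$-contractions producing the $\epsilon\gamma_k$ plus $\epsilon^2(\gamma_i\gamma_j+\beta_i\beta_j)$ pattern visible in $R_{X_iX_j\bar Z_i\bar Z_j}$ and its relatives. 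For the (3 horizontal, 1 vertical) entries the only surviving piece is $\nabla_T A$, which reduces to ordinary $t$-differentiation of the connection data $h_i,v_i$; this differentiation generates the bilinear combinations $B_{ij}$ and $C_{ij}$, the term $-\epsilon\beta_i'$ in $R_{X_iT\bar Z_iT}$, and the $\epsilon\alpha$-correction in $R_{\bar Z_i\bar Z_j\bar Z_kT}$.

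The main obstacle is strictly bookkeeping. One must track how factors of $\epsilon$ enter each O'Neill term — one $\epsilon$ per vertical pairing and two when two $A$-tensors are contracted — and one must carefully split vertical and horizontal parts of brackets like $[Y_i^*,Y_j^*]$ in the working frame $\{\bar Z_i\}$, which is not an action-field frame because of the $-h_iX_i^*$ correction. Once these normalizations are set up correctly, the derivation is a direct but lengthy unraveling of the O'Neill identities, and is naturally relegated to Appendix 2 as a specialization of Theorem \ref{gencurv}.
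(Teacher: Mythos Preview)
Your plan is sound and you correctly identify both viable routes. The paper's own proof takes the second one you mention: it simply specializes the general cohomogeneity one formulas of Theorem~\ref{gencurv} via the substitutions $f_i\mapsto\epsilon$, $h_i\mapsto\epsilon h_i$, $g_i\mapsto v_i^2+\epsilon h_i^2$, then expands $\bar Z_i=(Y_i^*-h_iX_i^*)/v_i$ by multilinearity, illustrating this for the single component $R_{X_i\bar Z_j\bar Z_kT}=\epsilon C_{ij}$. Your O'Neill approach is a genuinely different and more structural argument; the identification of $\beta_i,\gamma_i$ with the curvature form $\Omega$ and of $B_{ij},C_{ij}$ with its covariant derivative matches the paper's Remark after Proposition~\ref{fat} and nicely explains \emph{why} the formulas organize themselves around those quantities, something the brute-force specialization does not.

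There is, however, a genuine gap. Your assertion that ``$L$-invariance of the base metric and connection forces $\nabla_UA\equiv0$ whenever $U$ is vertical'' is not correct: $L$-invariance gives vanishing \emph{Lie} derivative along the fundamental fields $X_i^*$, not vanishing covariant derivative. In fact your own $\epsilon$-bookkeeping shows these terms cannot vanish. For horizontal $X,Y$ and vertical $U,V$ one has $A_XU=O(\epsilon)$, since $g_B(A_XU,Y)=-\langle U,A_XY\rangle=-\epsilon\,Q(U,A_XY)$; hence the quadratic contraction $\langle A_XU,A_YV\rangle$ is $O(\epsilon^2)$. The linear-in-$\epsilon$ pieces such as the $\epsilon\gamma_k$ in $R_{X_i\bar Z_iX_j\bar Z_j}$ and the $2\epsilon\gamma_k$ in $R_{X_iX_j\bar Z_i\bar Z_j}$ therefore \emph{must} come from the $(\nabla_UA)$ term. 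What actually happens is that, because the fiber is $\S^3$ with its bi-invariant metric, $(\nabla_{X_i^*}A)_{\bar X}\bar Y$ picks up the fiber Lie bracket $[X_i,A_{\bar X}\bar Y]$; pairing with a vertical vector then produces exactly the $\epsilon\gamma$-terms (compare $2\epsilon\gamma_k$ with $\epsilon\,Q([X_i,X_j],\Omega(\bar Z_i,\bar Z_j))=\epsilon\,Q(2X_k,\gamma_kX_k)$). Once this step is corrected the O'Neill derivation goes through; alternatively, falling back on the direct specialization of Theorem~\ref{gencurv} that you already mention avoids the issue entirely and is what the paper does.
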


In the above formulas, $\gamma$ is a more complicated expression,
but
it will not enter in the curvature conditions when $\e\to 0$.
\tref{curv conn} follows easily from the curvature tensor for a
general \com\ in \tref{gencurv} in the  Appendix.

\begin{rem}\label{rems}
(a) One easily shows that the curvature $\Omega$ of the principal
connection, and thus the O'Neill tensor $-\frac12\Omega$  of the
Riemannian submersions $P_k\to\Sph^4$ and $Q_k\to\CP^2$, is
determined by: $ \Omega(T,\bar{Z}_i) = \alpha_i\, X_i \ ,\
\Omega(\bar{Z}_i,\bar{Z}_j) =  \beta_k\, X_k $, with $(i,j,k)$
cyclic, and hence $\alpha_i, \ \beta_i$ encode the curvature
$\Omega$. Similarly, from $ (\nabla_A\Omega)(B,C)=2 \sum_i
\langle R( X^*_i ,A)B,C \rangle X_i $ for  any horizontal
$A,B,C$ it follows that $A_{ij}$ and $B_{ij}$ encode the covariant
derivative of $\Omega$. Hence it easily follows that the metric is 3-Sasakian if and only if
$\e=1,\;  \alpha_1=\alpha_2=\beta_3=1,\  \alpha_3=\beta_1=\beta_2=- 1, \; \text{(the signs are determined by the smoothness conditions) } \text{ and }
A_{ij}=B_{ij}=0.$

(b) One easily sees that the bundle is fat, i.e. all vertizontal curvatures are positive, if and only if $
\alpha_i\ne 0, \beta_i \ne 0
$. Notice  that the bundles $P_k$ and
$Q_k$ over $\Sph^4$ all admit fat principal connections since they
carry a 3-Sasakian metric \cite{GWZ}.

(c) If we divide by $\{1\}\times\S^3$, instead of
$\S^3\times\{1\}$, we obtain a second orbifold principal bundle and one easily sees
 that this
bundle cannot have a fat principal connection by using $\beta_i\ne 0$
and smoothness. Similarly, one shows that
 the exceptional manifolds $R$ with slopes $(1,3)$ and $(2,1)$ \cite{GWZ}
 does
 not admit any fat principal connection for both orbifold principal bundles.
The same holds for the cohomogeneity one action on the 7-dimensional
Berger space, where the slopes are $(1,3)$ and $(3,1)$ \cite{Z2}.

(d) All 2-planes which contain the vector $T$ have positive
curvature if and only if
$$
 \qquad\left(\frac{\alpha_i'}{\alpha_i}\right)^2  <
{L_i}\quad,\quad 1\le i \le 3
$$
which follows by looking at all 2-planes of the form
 $(T,X_i^*+s\bar{Z}_i)$ for some $s$. Similarly, a necessary condition for all 2-planes tangent to the
principal orbit to have positive curvature is that
$$
  \qquad  A_{ij}^2<\;
\beta_{i}^2{M_k}\qquad \text{ for   all $i,j,k$ distinct}
$$
which follows by looking at 2-planes of the form $(\bar{Z}_j,\bar{Z}_i+sX_i^*)\ ,
i\ne j$.
\end{rem}

\section{The Curvature Operator  and invariant 4-forms}

\smallskip

In this section we discuss the Thorpe method adapted to
our situation, and our choice
of a suitable auxiliary invariant 4-form to be used to modify the curvature operator.

\smallskip

For the 7-manifolds $P_k,Q_k$ it seems to be quite difficult to
obtain necessary and sufficient conditions for all 2-planes to have
positive curvature in terms of the components of $\hR$. Instead we
develop in the following a set of sufficient conditions which are
easier to verify.

\smallskip

For this, we use a method for estimating sectional curvature due to
Thorpe \cite{Th1}, \cite{Th2} \cite{putmann},  which we now review.
If we denote by $V$ the tangent space at a point in a manifold $M$,
we can regard the curvature tensor as a linear map
$$\hat{R}\colon\Lambda^2 V\to\Lambda^2 V ,$$ which, with respect to
the natural induced inner product on $\Lambda^2 V$, becomes a
symmetric endomorphism. The sectional curvature is then given by:
$$\sec(v,w)=\ml \hat{R}(v\wedge w),v\wedge w\mr
$$
if $v,w$ is an orthonormal basis of the 2-plane they span.

\smallskip

If $\hR$ is positive definite, the sectional curvature is clearly
positive as well. But this condition is exceedingly strong since it
in particular implies that the manifold is covered by a sphere
\cite{BW}. As was first pointed out by Thorpe, one can modify the
curvature operator by using a 4-form $\eta\in \Lambda^4(V)$. It
induces another symmetric endomorphism $\hat{\eta}\colon\Lambda^2
V\to\Lambda^2 V$ via $\ml \hat{\eta}(x \wedge y),z\wedge w\mr
=\eta(x,y,z,w)$. We can then consider the modified curvature
operator $\hR_\eta=\hR+\hat{\eta}$. It satisfies all symmetries of a
curvature tensor, except for the Bianchi identity. Clearly $\hR$ and
$\hR_\eta$ have the same sectional curvature since
$$\ml \hR_\eta(v\wedge w),v\wedge w\mr=
\ml \hR(v\wedge w),v\wedge w\mr + \eta(v,w,v,w)=\sec(v,w)$$

If we can thus find a 4-form $\eta$ with $\hR_\eta>0$, the sectional
curvature is positive.  Thorpe showed \cite{Th2} that in dimension
4,  one can always find a 4-form such that the smallest eigenvalue
of $\hR_\eta$ is also the minimum of the sectional curvature, and
similarly a possibly different 4-form such that the largest
eigenvalue of $\hR_\eta$ is the maximum of the sectional curvature.
This is not the case anymore in dimension bigger than 4 \cite{Zo}.
Nevertheless this can be an efficient method to estimate the
sectional curvature of a metric. In fact, P\"uttmann \cite{putmann}
used this to compute the maximum and minimum of the sectional
curvature of all positively curved homogeneous spaces, which are not
spheres. It is peculiar to note though that this method does not
work to determine which homogeneous metrics on $\Sph^7$ have
positive curvature, see \cite{VZ1}.

\bigskip

To illustrate this method, we first derive necessary and sufficient
conditions for positive curvature on the base, although in the end,
positive curvature on the base will  be a consequence of the
positivity of the determinants in Section 5.

Using the orthonormal basis  $Z_i, T$ of the tangent space along the
normal geodesic described in Section 2, and letting $d\theta_i$ be
the one forms dual to $Z_i$, we have:

\smallskip

\begin{thm}\label{base pos}
The cohomogeneity one metric
$$ds^2=dt^2+v_1^2(t)d\theta_1^2+v_2^2(t)d\theta_2^2+v_3^2(t)d\theta_3^2$$
has positive curvature if and only if
$$
L_i>0 \quad , \quad M_i>0 \quad \text{and} \quad \left|N_i-N_j\right|<\sqrt{L_iM_i}+
\sqrt{L_jM_j}.
$$
where $L_i,M_i,N_i$ are the curvature components defined in
\eqref{curvbase}.
\end{thm}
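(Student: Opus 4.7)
The plan is to apply Thorpe's method pointwise on the $4$-dimensional base, using the unique (up to scalar) invariant $4$-form $\Omega=T\wedge Z_1\wedge Z_2\wedge Z_3$, and to reduce positive definiteness of $\hR+\lambda\,\hat\Omega$ to an elementary condition on intersections of three intervals in $\R$.

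The first step is to put $\hR$ in block-diagonal form in the ordered basis
\[
\{T\wedge Z_1,\; Z_2\wedge Z_3,\; T\wedge Z_2,\; Z_3\wedge Z_1,\; T\wedge Z_3,\; Z_1\wedge Z_2\}
\]
of $\Lambda^2V$. Any component of $\hR$ outside the three resulting $2\times2$ blocks is of the form $R(T,Z_i,T,Z_j)$, $R(Z_i,Z_j,Z_i,Z_k)$, or $R(T,Z_i,Z_i,Z_j)$ with $i\ne j\ne k$; each of these is killed by the sign-change involution in the isotropy $H$ that fixes $T,Z_i$ and negates the remaining $Z$'s (so it negates the component while leaving it invariant). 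Inside the $i$-th block, the pair-symmetry and antisymmetry of $R$ yield $R(T,Z_i,Z_j,Z_k)=R(Z_j,Z_k,T,Z_i)=-R(Z_j,Z_k,Z_i,T)=-N_i$ for $(i,j,k)$ cyclic, so
\[
B_i=\begin{pmatrix} L_i & -N_i\\ -N_i & M_i\end{pmatrix}.
\]

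The second step is the modification by $\eta=\lambda\Omega$. For the standard orientation one checks $*(T\wedge Z_i)=Z_j\wedge Z_k$ for $(i,j,k)$ cyclic, so $\hat\eta$ adds $\lambda$ to each off-diagonal entry:
\[
B_i(\lambda)=\begin{pmatrix} L_i & \lambda-N_i\\ \lambda-N_i & M_i\end{pmatrix}.
\]
Positive definiteness of $B_i(\lambda)$ is equivalent to $L_i>0$, $M_i>0$, and $(\lambda-N_i)^2<L_iM_i$, i.e., $\lambda\in I_i:=(N_i-\sqrt{L_iM_i},\,N_i+\sqrt{L_iM_i})$. A common $\lambda$ works for all three blocks exactly when $I_1\cap I_2\cap I_3\ne\emptyset$, and by Helly's theorem on the line this is equivalent to pairwise intersection, which unwinds to $|N_i-N_j|<\sqrt{L_iM_i}+\sqrt{L_jM_j}$.

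The third step ties the two directions together. For sufficiency, any unit decomposable form $\omega=X\wedge Y$ satisfies $\omega\wedge\omega=0$, hence $\ml\hat\eta(\omega),\omega\mr=\eta(X,Y,X,Y)=0$ and $\sec(X,Y)=\ml(\hR+\hat\eta)(\omega),\omega\mr>0$ whenever $\hR+\hat\eta$ is positive definite. For necessity I invoke the classical Thorpe theorem recalled at the start of Section~3: in dimension $4$ the single parameter $\lambda$ is enough, so $\sec>0$ at the point forces the existence of some $\lambda\in\R$ with $\hR+\lambda\Omega$ positive definite, and the block analysis then produces exactly the stated inequalities. The main obstacle is the first step: one has to verify that every off-diagonal cross term between distinct blocks really is annihilated by some isotropy involution, and to track the sign conventions so that the off-diagonal entry of $B_i$ comes out uniformly as $-N_i$—this is what makes Helly's theorem output $|N_i-N_j|$ on the nose, rather than some combination of $|N_i\pm N_j|$.
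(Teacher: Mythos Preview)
Your proof is correct and follows essentially the same approach as the paper: block-diagonalize $\hat R$ on $\Lambda^2V$ via the isotropy action, modify by a multiple of the volume form, and reduce positive definiteness to the pairwise intersection of three intervals centered at $\pm N_i$ with radii $\sqrt{L_iM_i}$, invoking Thorpe's dimension-four theorem for necessity. The only cosmetic differences are your basis ordering and sign convention for $\Omega$ (yielding off-diagonal entry $\lambda-N_i$ rather than the paper's $N_i+d$) and your explicit appeal to Helly's theorem on the line, which the paper states directly as $|C_i-C_j|<R_i+R_j$.
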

\begin{proof}
Using the orthonormal basis $Z_1,Z_2,Z_3,T$ of the tangent space
$V$, we write $\Lambda^2V$ as the direct sum of the following three
2-dimensional subspaces:
$$\left\{Z_1\wedge Z_2\ , \ Z_3\wedge T\right\}\ ,\
 \left\{Z_2\wedge Z_3\ , \ Z_1\wedge T\right\}\ ,\
 \left\{Z_3\wedge Z_1\ , \ Z_2\wedge T\right\}.$$
 Notice that these are in fact inequivalent to each other under the
 action of the isotropy group $\{\pm 1, \pm i,\pm j,\pm k\}$ and hence the
 curvature operator $\hat{R}\colon\Lambda^2 V\to\Lambda^2 V$
 breaks up into three $2\times 2$ blocks. If we modify this
 curvature operator with the 4-form $\eta=d\cdot Z_1\wedge Z_2\wedge Z_3\wedge
 T$, the modified operator $\hR_\eta=\hR+\hat{\eta}$
  consists of the following
 blocks
 \begin{equation}\label{2x2}
\begin{pmatrix}
M_i &N_i+d\\
N_i+d &L_i
\end{pmatrix} \qquad i=1,2,3.
 \end{equation}
Assuming that $L_i>0,\ M_i>0$ this matrix is positive definite if
and only if $d$ lies in the interval $I_k:=[C_k-R_k,C_k+R_k]$ with
center $C_k=-N_k$ and radius $R_k=\sqrt{L_kM_k}$. For $\hR_\eta$ to
be positive definite, we thus need to find a $d$ that lies in the
intersection of these three intervals. On the other hand, the
intervals $I_k$ intersect if and only if $|C_i-C_j|<R_i+R_j$ for all
$i<j$. Since, as was shown by Thorpe, this method in dimension $4$
always finds the minimum of the sectional curvature for suitable
$d$, the result follows.
\end{proof}

\smallskip

For the 7-manifolds $P_k$, we use the fact that the curvature
operator $\hat{R}$ commutes with any isometry and hence the action
of the isotropy group $H$. We therefore choose the basis of
$\Lambda^2 V $, where $V=\spam\{T,X_i^*,\bar{Z}_i\}$, as follows:
$$
\{X_1^*\wedge \bar{Z}_1,\ X_2^*\wedge \bar{Z}_2,\ X_3^*\wedge \bar{Z}_3  \}
$$
$$
\{X_1^*\wedge X_2^*,\ X_1^*\wedge \bar{Z}_2,\ \bar{Z}_1 \wedge X_2^* ,
\ X_3^*\wedge T,\  \bar{Z}_1\wedge \bar{Z}_2 ,\ \bar{Z}_3\wedge T
\}
$$
$$
\{X_2^*\wedge X_3^*,\ X_2^*\wedge \bar{Z}_3,\ \bar{Z}_2\wedge X_3^*,
\ X_1^*\wedge T, \ \bar{Z}_2\wedge \bar{Z}_3 , \ \bar{Z}_1\wedge T  \}
$$
$$
\{X_3^*\wedge X_1^*,\ X_3^*\wedge \bar{Z}_1,\ \bar{Z}_3\wedge X_1^*,
 \ X_2^*\wedge T, \ \bar{Z}_3\wedge \bar{Z}_1 , \ \bar{Z}_2\wedge T   \}
$$
The action of $H$ is trivial on the first space, and the action on
the remaining 3 spaces are inequivalent to each other, whereas on
each individual space, it acts the same on all six vectors. Thus the
curvature operator can be represented by a matrix that splits up
into one $3\times 3$ block, which we denote by $A_0$, and three
$6\times 6$ blocks, denoted $A_{12}, \ A_{23}$ and $A_{31}$
respectively.

\smallskip

The needed considerations for the $6\times 6$ blocks can easily be
reduced further to the lower $5\times 5$ blocks
 by using the  following observation. If one uses
a Cheeger deformation by an isometric action of $G=\SU(2)$ or
$\SO(3)$ on a Riemannian manifold, then as long as all  2-planes
whose projection onto the $G$ orbits is one dimensional are
positively curved, the Cheeger deformation will automatically
produce positive sectional curvature on all 2-planes, when the
metric is shrunk sufficiently in the orbit direction (see e.g.
\cite{Mu}, \cite{PW2}).
 If
one applies this observation to the $\S^3$ action on the base, it
shows that all curvatures will eventually become positice as long as
 $\sec(T,Z_i)$ is positive, i.e. $v_i$  is concave \eqref{curvbase}. In particular, there are no
obstructions to obtaining positive curvature on the base for any
$\ell$. When applied to a deformation of the metric on the
7-manifold by the first factor in $\S^3\times\S^3$, it shows that
only the lower 5x5 block is needed.
  In the following $A_{ij}$ will denote this lower
$5\times 5$ block. Notice though that such a Cheeger deformation
stays within the class of connection metrics, in fact corresponds
precisely to letting $\e\to 0$. We also point out that our proof
in Section 5 works just as easily for the 6x6 matrix directly as well.

\smallskip

We now  modify $\hR$ with a 4-form $\eta$ on $V$.  As was observed
by P\"uttmann, the 4-form $\eta$ can be assumed to be invariant
under the isometry group and hence we choose $\eta$ to be invariant
under the action of $H=\triangle Q$ on $V$. One easily sees that
such 4-forms are of the form
\begin{align}\label{4form}\eta=&a_3 X_1^*\wedge X_2^*\wedge
\bar{Z}_1\wedge \bar{Z}_2 +a_1 X_2^*\wedge X_3^*\wedge \bar{Z}_2\wedge \bar{Z}_3 +a_2 X_3^*\wedge
X_1^*\wedge \bar{Z}_3\wedge Z_1 \notag \\
&+b_2 X_1^*\wedge \bar{Z}_2\wedge X_3^*\wedge T+
b_1 \bar{Z}_1\wedge X_2^*\wedge X_3^*\wedge T   + b_3 X_1^*\wedge X_2^*\wedge \bar{Z}_3\wedge T \\
&+ c_1 X_1^*\wedge \bar{Z}_2\wedge \bar{Z}_3\wedge T  +
c_2 \bar{Z}_1\wedge X_2^*\wedge \bar{Z}_3\wedge T + c_3 \bar{Z}_1\wedge \bar{Z}_2\wedge X_3^*
\wedge T \notag \\
& +
d_1 X_1^*\wedge X_2^*\wedge X_3^*\wedge T  +
d_2 \bar{Z}_1\wedge \bar{Z}_2\wedge \bar{Z}_3  \wedge T\notag
\end{align}
for some constants $a_i,b_i,c_i,d_i$, which we will call
P\"{u}ttmann parameters from now on.

\smallskip

The optimal choice of these P\"uttmann parameters is in general a
difficult problem. For our metrics we set
\begin{align}\label{parameters} a_i&= \epsilon\,
\beta_i-\epsilon^2\beta_j\beta_k \notag \\
b_i&=-\e
\alpha_i+\frac12\e^2(\alpha_j\beta_k+\alpha_k\beta_j)\\
c_i&=0, \quad d_1=0,\quad d_2=-N_2\notag.
\end{align}

We shortly motivate this choice. Using the curvature formulas in
\tref{curv conn}, we see that the matrix $A_0$ takes on the form
$$A_0=\begin{pmatrix} \epsilon^2 \alpha_1^2 &\epsilon\, \beta_3-
\epsilon^2\beta_1\beta_2-a_3 & \epsilon\,\beta_2-
\epsilon^2\beta_1\beta_3-a_2\\
 \epsilon\,\beta_3-
\epsilon^2\beta_1\beta_2-a_3 &\epsilon^2 \alpha_2^2 &\epsilon\, \beta_1-
\epsilon^2\beta_2\beta_3-a_1\\
 \epsilon\,\beta_2-
\epsilon^2\beta_1\beta_3-a_2 &\epsilon\, \beta_1-
\epsilon^2\beta_2\beta_3-a_1 &\epsilon^2 \alpha_3^2
\end{pmatrix}
$$
 Our choice of $a_i$ makes this matrix diagonal, and  $\alpha_i\ne 0$
 then implies that it is positive definite.  Each one of the parameters $b_i$ and $c_i$ occur in  one $2\times 2$ minor (centered along the diagonal) of each $A_{ij}$ matrix. As in the proof of \tref{base pos}, they have positive determinant if and only if three intervals intersect, which
 suggests a reasonable choice for their values.
The P\"uttmann parameter $d_1$ only corresponds to
entries in the curvature matrix that are $0$ for a connection
metric. We thus set $d_1=0$.
 The last P\"{u}ttmann parameter $d_2$
   is
contained in the three lower $2\times 2$ blocks
$$
\begin{pmatrix}
M_i-3\e \beta_i^2 & N_i+d_2+\e \gamma\\
N_i+d_2+\e \gamma & L_i-3\e\alpha_i^2
\end{pmatrix}
$$
whose positivity is guaranteed when the  modified  curvature
operator on the base is positive definite, as $\e\to 0$. For our
metrics, it turns out that $d=-N_2$ is sufficient.

\smallskip

One now easily shoes that the lower $5\times 5$ block of the thus
modified curvature matrix $A_{ij}$ takes on the form
$$\hspace{-15pt}\begin{pmatrix}
  \e^2 \beta_i^2 &\epsilon^2(\beta_i\beta_j-\alpha_i\alpha_j)
  &
   \frac12 \e^2(\alpha_k\beta_i-\alpha_i\beta_k)&{-\e}\, A_{ij}
    & \e B_{ij}\\
\epsilon^2(\beta_i\beta_j-\alpha_i\alpha_j)
& \e^2 \beta_j^2 &\frac12 \e^2(\alpha_k\beta_j-\alpha_j\beta_k)&{-\e}\,
A_{ji} & \e B_{ji}\\
 \frac12 \e^2(\alpha_k\beta_i-\alpha_i\beta_k)&\frac12
 \e^2(\alpha_k\beta_j
 -\alpha_j\beta_k)
 &\e^2\alpha_k^2&-\e(B_{ij}+B_{ji}) &-\e\alpha_k'\\
 {-\e}\, A_{ij}&{-\e}\, A_{ji}
 & -\e(B_{ij}+B_{ji})&M_k-3\e \beta_k^2 &N_k+\e \gamma +d_2\\
  \e B_{ij}& \e B_{ji}
 &-\e\alpha_k'&N_k+\e \gamma +d_2& L_k-3\e\alpha_k^2
\end{pmatrix}$$
For example, the second entry in the first row is equal to
$\langle\hat{R}(X_1^*\wedge \bar{Z}_2), \bar{Z}_1 \wedge X_2^* \rangle +\eta(
X_1^*, \bar{Z}_2 , \bar{Z}_1 , X_2^* )=
\epsilon \beta_3
-{\epsilon^2} \alpha_1\alpha_2- a_3={\epsilon^2} (\beta_1\beta_2-\alpha_1\alpha_2 )
$ and similarly for the other entries.

\smallskip

In the above matrix we can remove an $\epsilon$ from the first 3 rows and columns and, as $\epsilon\to 0$, replace the lower $2\times 2$ block by the one in \eqref{2x2}.
We need to show that this new matrix, which does not depend on $\e$ anymore, is positive definite. By Sylvester's theorem it
suffices to show  that the determinants of
the $k\times k$ minors in the upper block (consisting of rows and
columns 1 through $k$) are positive for $k=1,\dots, 5$. Since this also implies that $\alpha_k\ne 0$, $A_0$ positive definite as well.

\smallskip

It is also instructive to notice that under the assumption that the
metric is 3-Sasakian,  all but one of the off diagonal components of
the modified curvature matrix $A_{ij}$ vanish, due to the above
choice of the P\"{u}ttmann parameters. Hence the modified curvature
operator is positive definite as long as the sectional curvature  on
the base is positive, thus recovering the main theorem in \cite{De1}
in our context. It is thus useful to stay close to a 3-Sasakian metric.

\section{Metric on the base and principal connection}

For our connection metric, the functions $v_i$ and $h_i$ are given by piecewise polynomials, which we choose as follows:

\begin{center}
\bf $\mathbf v_i$ functions.
\end{center}

\bigskip

\begin{align}\label{vfunctions}
v_1&=\left\{
  \begin{array}{ll}
    4\,t - 10\,t^{3} , & \hbox{$\qquad \ \ \ \ \ \  \, 0<t<1/10$} \\
    p_1(t), & \hbox{$\qquad \ \  1/10<t<1/2$} \\
  \dfrac {5}{4}  - 3\,(t - L)^{
2} + (t - L)^{3}, & \hbox{$\qquad \ \  \ 1/2<t<L$}
  \end{array}
\right. \notag \\
v_2&=\left\{
  \begin{array}{ll}
 149/200 -
 \dfrac {11}{9} \,t -\dfrac {1}{10} \,t^{2}  -
 \dfrac {1}{25} \,t^{3}, & \hbox{$\ \ \ \ \, 0<t<1/10$} \\
   p_2(t), & \hbox{$1/10<t<1/2$} \\
      - \dfrac43 \,(t - L)+\dfrac{3}{10} \,(t - L)^{
3} , & \hbox{\ \ $1/2<t<L$}
  \end{array}
\right.\\
v_3&=\left\{
  \begin{array}{ll}
    149/200 +
 \dfrac {11}{9} \,t -\dfrac {1}{10} \,t^{2}  -
  \dfrac {7}{10} \,t^{3}, & \hbox{$\ \ \ \  \, 0<t<1/10$} \\
    p_3(t), & \hbox{$1/10<t<1/2$} \\
   \dfrac {5}{4}  - 3\,(t - L)^{
2} -3 (t - L)^{3}, & \hbox{  \ $1/2<t<L$}
  \end{array}
\right.\notag
\end{align}
where $L=\frac{58}{100}$.

The polynomials $p_i(t)$ are chosen to be the unique degree 5
polynomials such that the new piecewise function is $C^2$ at $t=1/10$
and $t=1/2$. From the smoothness conditions in \tref{smooth conn1}, one
sees that the metric is  $C^2$ at $t=0$ and $t=L$. The third
derivatives  though show that the metric is not $C^3$.
For the principal connection we choose:

\begin{center}
\bf $\mathbf h_i$ functions.
\end{center}

\begin{align}\label{hfunctions}
h_1&=\left\{
  \begin{array}{ll}
    -1+4t^2-4t^4, & \hbox{$\quad \ \ \ \ \ \, 0<t<1/10$} \\
    q_1(t), & \hbox{\quad \ $1/10<t<1/2$} \\
    \dfrac{31}{12}(t-L)-\dfrac{16}{7}(t-L)^3, & \hbox{\ \ \quad $1/2<t<L$}
  \end{array}
\right.\notag \\
h_2&=\left\{
  \begin{array}{ll}
    \dfrac{21}{17}+\dfrac{16}{11}t-\dfrac{21}{17}t^2+\dfrac{1}{10}t^3,
& \hbox{$\ \ \ \ \, 0<t<1/10$} \\
    q_2(t), & \hbox{$1/10<t<1/2$} \\
    \dfrac53-\dfrac{4}{3}(t-L)^2+\dfrac14(t-L)^4, & \hbox{\ $ 1/2<t<L$}
  \end{array}
\right.\\
h_3&=\left\{
  \begin{array}{ll}
    \dfrac{21}{17}-\dfrac{16}{11}\, t-\dfrac{21}{17}t^2-\dfrac{1}{10}t^3,
 & \hbox{$ \ \ \ \ \, 0<t<1/10$} \\  \vspace{5pt}
    q_3(t), & \hbox{$1/10<t<1/2$} \\
    -\dfrac{31}{12}(t-L)+\dfrac{20}{11}(t-L)^3, & \hbox{\ $1/2<t<L$}
  \end{array}
\right.\notag
\end{align}
where
$q_i(t)$ are again the unique degree 5 polynomials such that the principal connection is $C^2$. It is interesting to note that if we
choose as a principal connection the Levi-Civita connection of the
metric in \eqref{vfunctions}, the resulting metric on $P_2$ does not
have positive curvature.

\smallskip

This metric was obtained as follows. We first find piecewise polynomial functions $h_i$ such that the principal connection is a very close approximation of the Levi-Civita principal connection associated to the  Hitchin metric. For the Hitchin metric on the base, the functions $v_2$ and $v_3$ are not concave at $t=0$ as required by positive curvature. We fist stay close to the convex hull of the Hitchin metric and then deform it further in order to satisfy the necessary and sufficient conditions in \tref{base pos} in order to produce a metric on the base with positive curvature. One then makes further changes to this metric, keeping the principal connection the same, until the necessary conditions in \rref{rems} (d) are satisfied. No further changes were necessary in order to make the determinants described in Section 3 positive.
In Figure 1 we give a picture of the $v_i$ functions together with the Hitchin functions on the left, and the $h_i$ functions on the right.

\begin{figure}[!ht]
\begin{center}
\includegraphics[width=3in,height=3in,angle=-90]{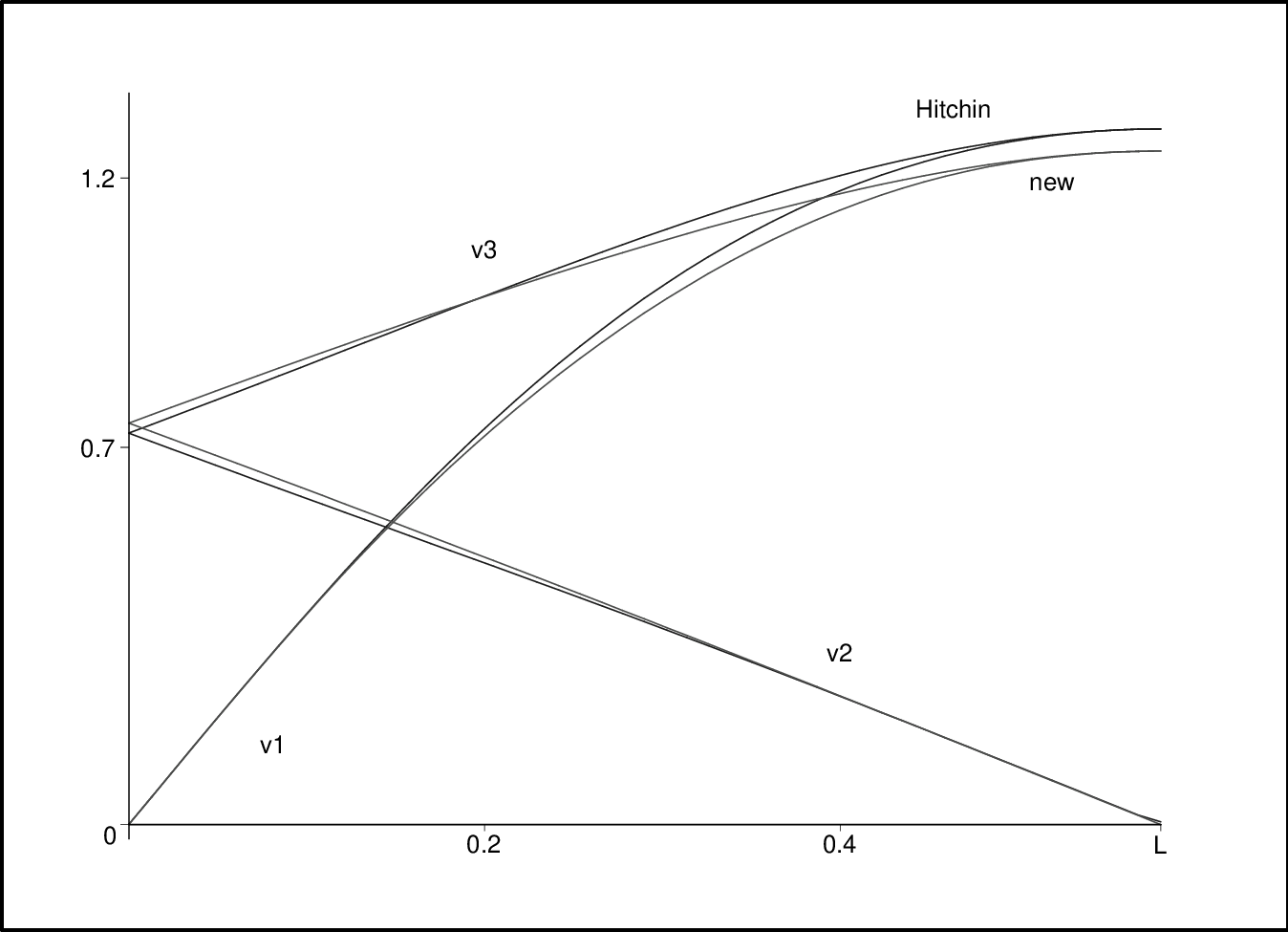}\qquad
\includegraphics[width=3in,height=3in,angle=-90]{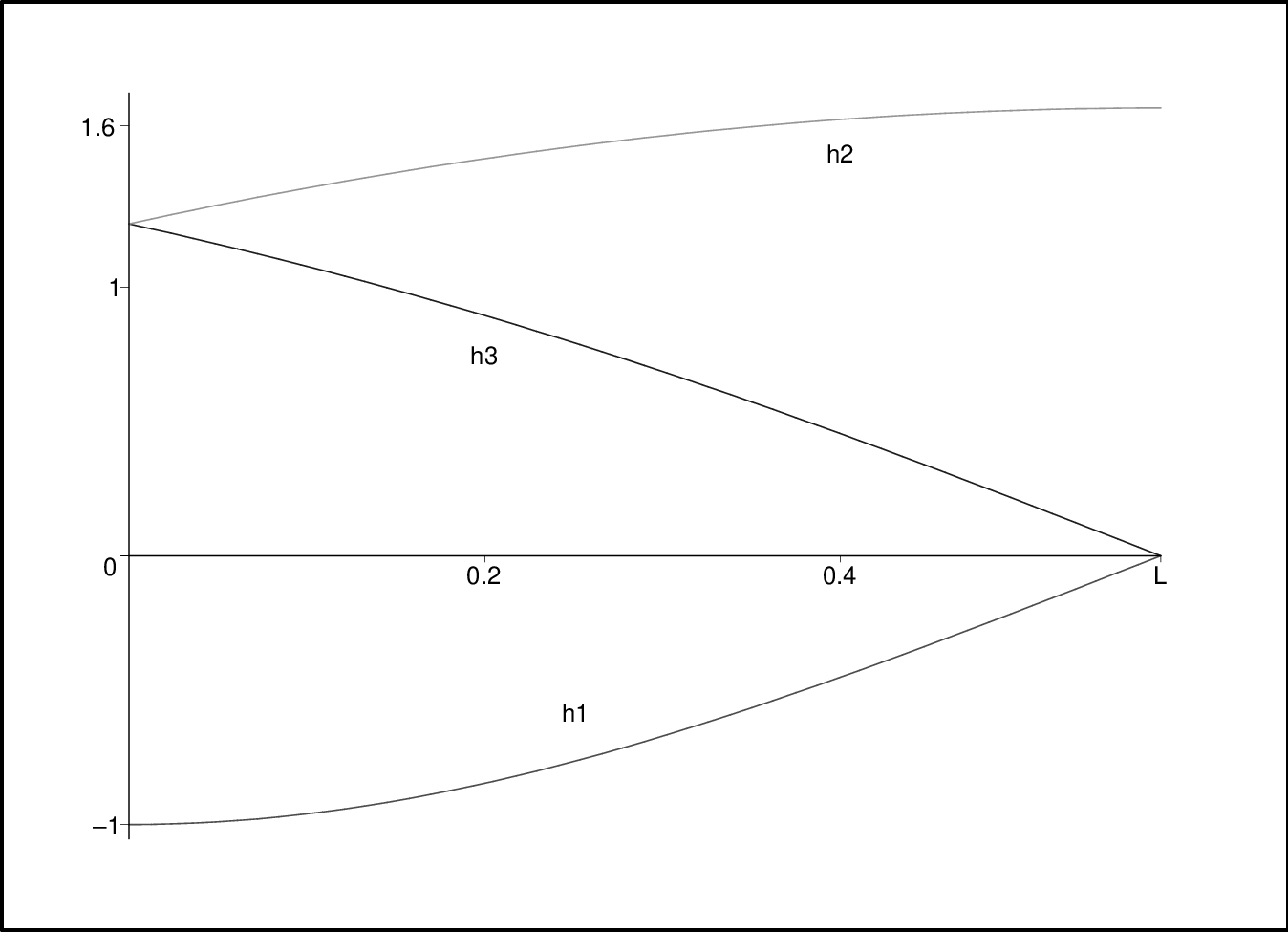}
\end{center}
\caption{ $v_i$ functions  and Hitchin functions, as well as $h_i$.}
\end{figure}

\section{Positivity of the determinants}

As explained in Section 3, our proof will show that the modified
curvature operator is positive definite by choosing the 4-form as in
\eqref{4form}, with P\"{u}ttmann parameters  \eqref{parameters}.
For this we need to prove that the determinants of the $k\times k$ minors in
the upper block of  the $5\times 5$ matrices
$A_{ij}$ (consisting of rows and columns 1 through $k$) are
positive for $k=1,\dots, 5$.  We divide the interval $[0,L]$ into
the three subintervals
$[0,\frac{1}{10}],\ [\frac{1}{10},\frac{1}{2}]$ and $[\frac{1}{2},\frac{58}{100}]$ on
which our metric is defined by polynomials. Each determinant is thus
a rational function in the arclength parameter $t$ whose coefficients are rational as well. To show that it is
positive, we use a theorem due to Sturm (see \cite{Ja}) that gives a
simple procedure for counting zeroes of a polynomial with rational
coefficients on a closed interval in terms of a Euclidean algorithm.

To be specific, let $p(t)$ be a polynomial with integer
coefficients.
One inductively defines a finite sequence of polynomials (Sturm's sequence) with $p_1=p(t)$, $p_2=p'(t)$ and $p_{i+1}=-rem(p_i,p_{i-1})$ where $rem()$ is the remainder of the polynomial division. If $p(t)$ and $p'(t)$ have no common zeros, the last remainder $p_k(t)$ is a nonzero constant. Otherwise $p_k(t)=0$ and $p_{k-1}(t)$ is a common factor of $p(t)$ and $p'(t)$, corresponding to double roots of $p(t)$, and
thus $p$ and  $p/p_{k-1}$ have the same zeroes. In this case the Sturm sequence for $p$ is that of $p/p_{k-1}$.
 Now Sturm's theorem states that if $p_i(t),\
i=1\dots k$ is the Sturm sequence of $p(t)$, then the number of real
zeroes in the half open interval $(a,b]$ is equal to the difference
in the number of sign changes (not counting any zeroes) in the
sequence $[p_1(a),\dots , p_k(a)]$ and the sequence $[p_1(b),\dots ,
p_k(b)]$. Since the endpoints of the 3 intervals are rational numbers, the same is true for the sequences $[p_1(a),\ldots,p_k(a)]$ and
$[p_1(b),\ldots, p_k(b)]$. Thus the proof only deals with calculations involving rational numbers.

\smallskip

The degrees of the determinant polynomials are quite large, but one can easily modify the above procedure to significantly reduce these degrees, so that the proof  can  be carried out by hand:  In each subinterval we translate the parameter $t$ in $v_i$ and $h_i$ so that
the determinants are polynomials $f(s)$ of a variable $s$ defined in $[0,S]$. We define a new polynomial
$g(s)$ that collects all the monomials in $f(s)$ with negative coefficient. Then the derivatives $g^{(k)}(s)$ are non-positive
decreasing functions of $s$ for any $k\geq 0$. In particular $g^{(k)}(s)\geq g^{(k)}(S)$. If $f_k(s)$ denotes the truncated  polynomial collecting the monomials of $f(s)$ of order $\leq k-1$, the remainder at a point $s\in [0,S]$ is given by Taylor's formula and can be estimated by
$$\frac{1}{k!} f^{(k)}(s_0(s)) s^k\geq \frac{1}{k!} g^{(k)}(s_0(s)) s^k\geq \frac{1}{k!} g^{(k)}(S) S^k=R$$
independently of $s$. Since we choose $S$ to be rational, $R$ is  rational  as well. Then
$$f(s)\geq f_k(s)+R$$
and we prove that this last polynomial is positive in $[0,S]$ using Sturm's theorem as above. In this procedure we need to divide the interval in the middle into 4 further subintervals. It turns out that  the degree of
$f_k$ will be smaller than $20$, in most cases smaller than $10$.
   A Maple program that
carries out these calculations is made available at
www.math.upenn.edu/\~{}wziller/research.html. Notice that Maple can do this  symbolically, i.e., no floating point operations are used.

\smallskip

For illustrative purposes, we  draw the graph of the
determinants in Figure 3. The determinants of the $4\times 4$ and
$5\times 5$ minor in the matrix $A_{23}$ is not included in the
second picture since its values lie between 5 and 25.

\begin{figure}[!ht]
\begin{center}
\includegraphics[width=3.5in,height=3in,angle=-90]{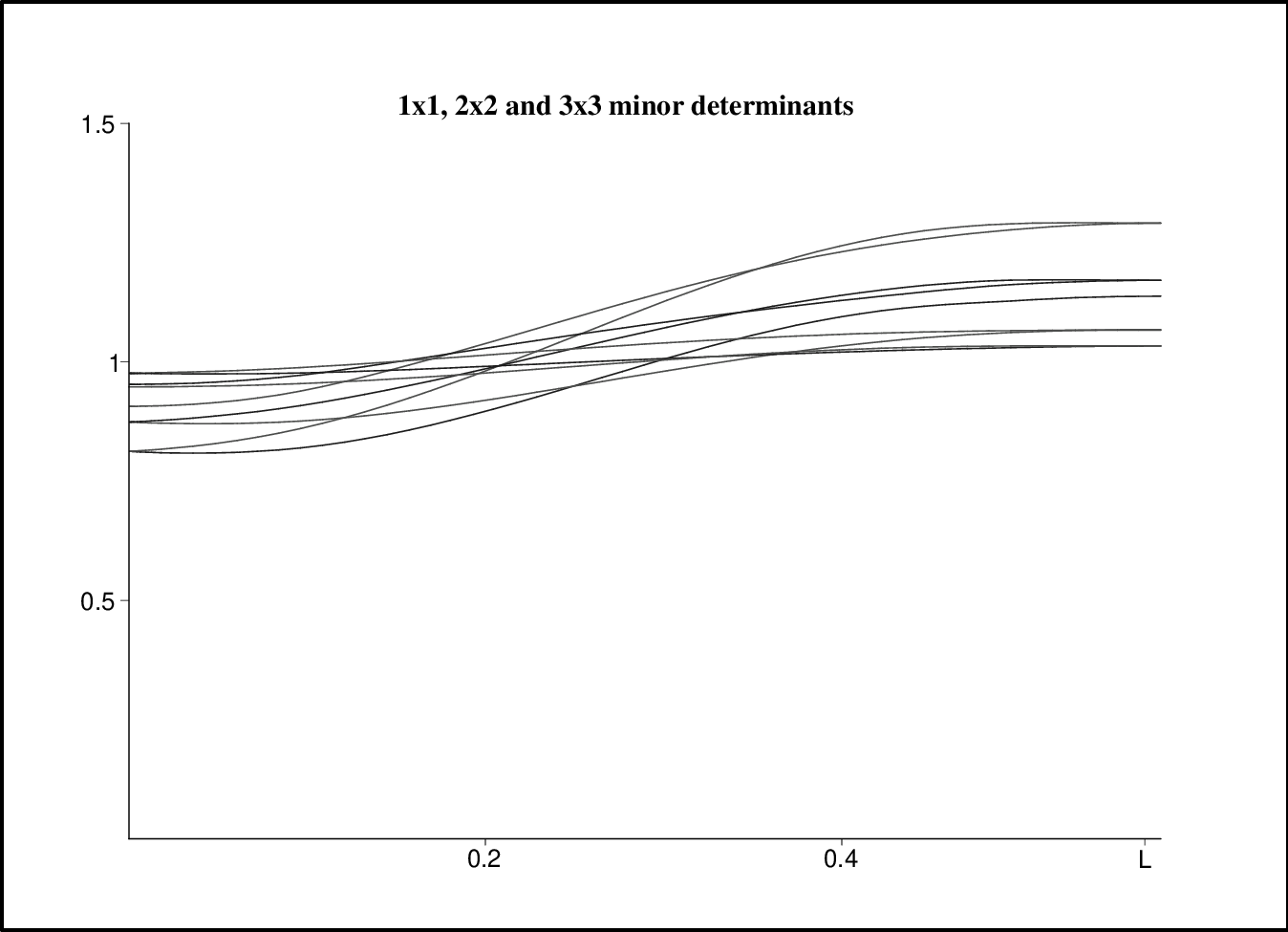}\quad
\includegraphics[width=3.5in,height=3in,angle=-90]{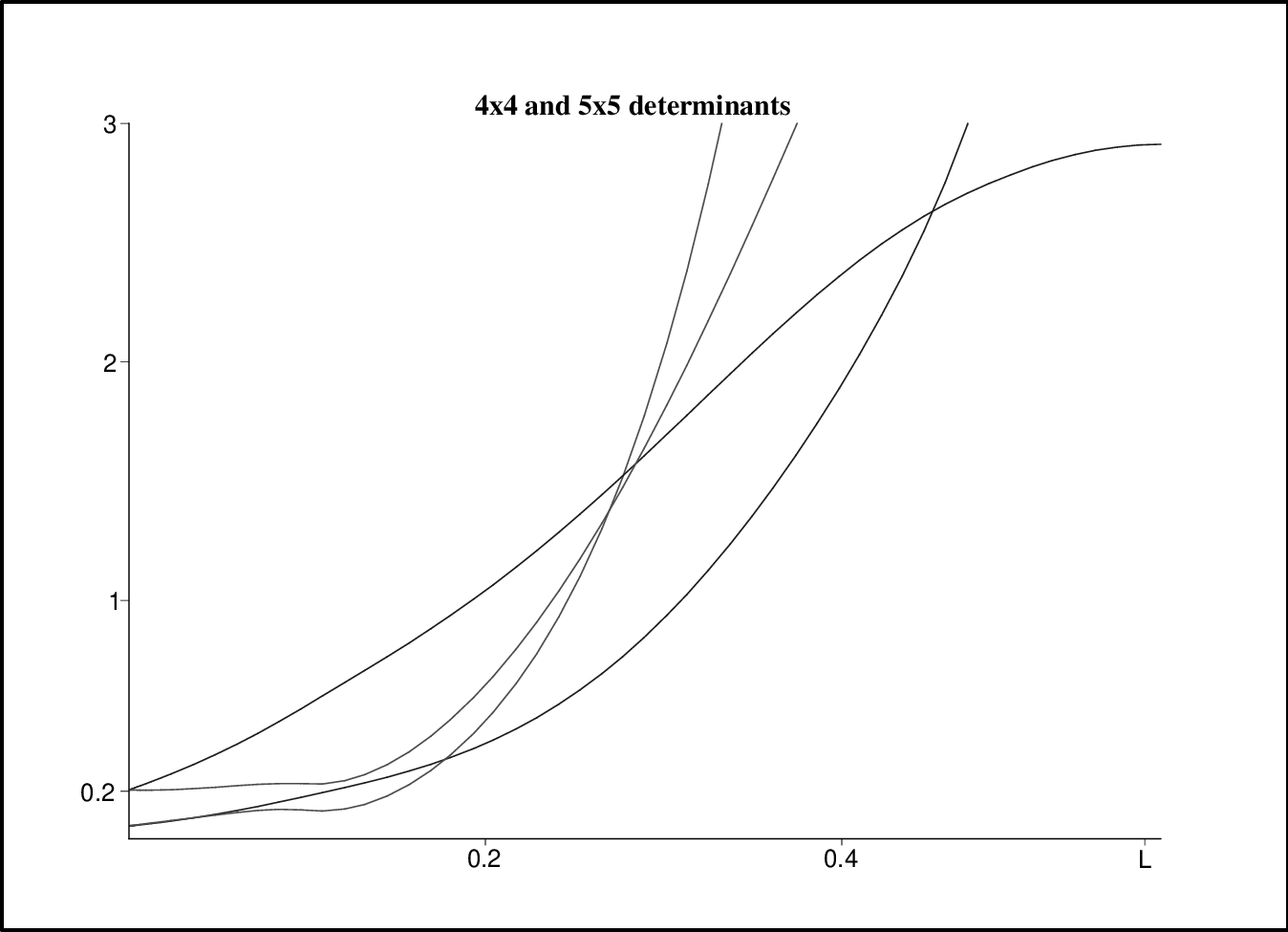}\quad
\end{center}
\caption{Determinants of all 5 minors in $A_{ij}$ }
\end{figure}

\vspace{100pt}

\section{Appendix }

\bigskip

\begin{center}
 \emph{ Smoothness of metrics on $P_k$ and $Q_k$.}
 \end{center}

\bigskip

At the endpoints $t=0$ and $t=L$, the principal orbits collapse and
hence the functions need to satisfy certain smoothness conditions.
Smoothness conditions for cohomogeneity one manifolds have been discussed, e.g., in \cite{BH} and \cite{EW}. For convenience of the reader, we present here an elementary proof in the case of codimension 2 orbits.

 We do this first for arbitrary slopes $(p,q)$ of the circle $\Ko\subset\S^3\times\S^3$  since
this makes the discussion more transparent and for convenience we
assume the singular orbit occurs at $t=0$. We will also assume that
only the inner products in \eqref{functions} are non-zero, although
in this generality this is not necessarily true for all $G$
invariant metrics.

\begin{thm}\label{gen smooth}
Let $H\subset K= \{e^{ip\theta},e^{iq\theta}\}\cdot H\subset
G=\S^3\times\S^3$ be a singular orbit at $t=0$ with $H$ finite,
$|H\cap \Ko|=k$, and $\gcd(p,q)=1$. Assuming that $f_i,g_i,h_i$ are
the only non-vanishing inner products, the metric is smooth if and
only if:
\begin{itemize}\label{smoothsingorbit}
\item[(a)] For the collapsing functions $f_1,g_1,h_1$ we have:
\begin{align*}
f_1,g_1,h_1 \text{ are even at } t&=0 \text{ and }\\
p\; f_1=-q\; h_1\quad , \quad  q\; g_1&=-p\; h_1\\
p^2\,f_1''+q^2\,g_1''+2pq\, h_1''&=
2k^2\\
\end{align*}
\item[(b)] For the remaining functions we have:
\begin{align*}
f_2+f_3&=\phi_3(t^2)
&f_2-f_3&=t^{ \frac{4|p|}{k}}\phi_4(t^2), \\
g_2+g_3&=\phi_5(t^2)
&g_2-g_3&=t^{ \frac{4|q|}{k}}\phi_6(t^2),\\
h_2+h_3 &=t^{ \frac{2|q-p|}{k}}\phi_7(t^2)
&h_2-h_3&=t^{ \frac{2|q+p|}{k}}\phi_8(t^2).
\end{align*}
'\end{itemize} where $\phi_i$ are smooth functions. When the
exponent in $t$ is a fraction, the right hand side should be set
to $0$.
\end{thm}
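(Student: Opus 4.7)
The plan is to apply the slice theorem to identify a tubular neighborhood of the singular orbit $G/K$ at $c(0)$ with the associated disk bundle $G \times_K D^2$, where $D^2$ is the $K$-invariant normal disk at $c(0)$ and the normal geodesic $c(t)$ is identified with the radial ray $\{(t,0)\}$ in polar coordinates $(r,\phi)$. A $G$-invariant tensor on the tube is uniquely determined by its restriction to $\{e\} \times D^2$, which must be $K$-equivariant, and smoothness of the $G$-invariant metric on $M$ is equivalent to smoothness of this restricted tensor at the origin of the disk. Since the principal isotropy $H$ acts trivially on $D^2$ and $H\cap K^0 = \Z_k$, the identity component $K^0 \simeq \S^1$ descends to the standard rotation of $D^2$ at speed $k$: the $K^0$-parameter $\theta$ induces disk angle $\phi = k\theta$, so any $K^0$-weight $w$ function on the tube corresponds to the disk Fourier mode $e^{i(w/k)\phi}$ upon restriction to the slice.

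Next I would compute the $K^0$-weight decomposition of $\fm=\fg$. Using the quaternionic identity $\Ad(e^{i\alpha})\,j = \cos(2\alpha)\,j + \sin(2\alpha)\,k$ in each factor, the vectors $X_1$ and $Y_1$ are $K^0$-fixed (weight $0$), while the complexified vectors $X_2\pm iX_3$ and $Y_2\pm iY_3$ have weights $\mp 2p$ and $\mp 2q$ respectively. An inner product $\langle X^*,Y^*\rangle$ viewed as a function on the tube (for $X,Y$ of weights $m,n$) carries $K^0$-weight $-(m+n)$ by $G$-invariance. Thus $f_1, g_1, h_1, f_2+f_3, g_2+g_3$ all have weight $0$; $f_2-f_3$ carries weights $\pm 4p$; $g_2-g_3$ carries weights $\pm 4q$; and $h_2 \pm h_3$ carry weights $\pm 2(p \mp q)$.

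Smoothness then reduces to standard Fourier analysis on $D^2$: a smooth function on the disk of angular mode $e^{im\phi}$ near the origin must equal $r^{|m|}\tilde f(r^2)e^{im\phi}$ for a smooth $\tilde f$, and this makes sense only when $m \in \Z$; otherwise the function is not single-valued in $\phi$ and must vanish identically. Translating via $m = w/k$, each inner product restricted along $c(t) = (t,0)$ either equals $t^{|w|/k}$ times a smooth function of $t^2$, or vanishes identically when $k \nmid w$. Applying this, together with the product-to-sum identities coming from $\Ad(K^0)$ acting on the pairs $\{X_2,X_3\}$ and $\{Y_2,Y_3\}$, directly yields the six conditions in (b).

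For (a), weight $0$ forces $f_1, g_1, h_1$ to be smooth in $t^2$, hence even at $t=0$. The collapsing direction $C = pX_1+qY_1 \in \fk/\fh$ has Killing field $C^*$ which, under the slice identification, pulls back to $k\,\partial_\phi$ on $D^2$, since $K^0$ rotates at speed $k$. A smooth disk metric normalized so that $|\partial_r|^2=1$ (the arclength condition along $c$) satisfies $|\partial_\phi|^2 = r^2 + O(r^4)$ at the origin, hence
$$|C^*|^2(t) = (p^2 f_1 + 2pq\, h_1 + q^2 g_1)(t) = k^2 t^2 + O(t^4).$$
The vanishing at $t=0$, combined with positive semi-definiteness of the $2\times 2$ matrix on $\spam\{X_1,Y_1\}$ as a limit of positive-definite ones, forces $(p,q)^T$ into its kernel, yielding $pf_1(0) = -qh_1(0)$ and $qg_1(0) = -ph_1(0)$; matching the quadratic coefficient gives $(p^2 f_1''+ 2pq\, h_1''+ q^2 g_1'')(0) = 2k^2$. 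The main bookkeeping hurdle throughout is carefully tracking the speed-$k$ factor in the $K^0$-action on $D^2$: this factor is precisely what produces both the $\Z$-versus-non-$\Z$ dichotomy in (b) and the $k^2$ coefficient in (a).
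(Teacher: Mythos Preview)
Your proposal is correct and follows essentially the same approach as the paper: both reduce smoothness on the tube to $K^0$-equivariant analysis on the normal disk $D^2$, decompose $\fm$ into $K^0$-rotation modules, and use the standard fact that a function on $D^2$ transforming with angular mode $e^{im\phi}$ extends smoothly iff it has the form $r^{|m|}\psi(r^2)$ (the paper packages this as Lemmas on the slice, trivial modules, one module, and two modules, while you phrase it uniformly via weights). The only minor variation is in part~(a): where the paper uses the geometric orthogonality of the slice and the singular orbit to get $\langle Z^*,W^*\rangle(0)=0$ and then solves the resulting linear system, you instead invoke positive semidefiniteness of the limiting $2\times 2$ Gram matrix to force $(p,q)$ into its kernel---both routes yield the same two relations $pf_1(0)=-qh_1(0)$, $qg_1(0)=-ph_1(0)$.
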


\begin{proof}
First notice that by $G$ invariance of the metric, it is smooth as
long as the restriction to a slice $V$, i.e. a disc orthogonal to
the singular orbit $G/K$, is smooth. The metric is defined along a
line in $V$ and needs to be extended by $\Ko$ invariance. Thus the
issue is wether this extension is smooth at $0\in V$.

In the following sequence of lemmas we do not yet make any
assumption on the group $G$, but only assume that the singular orbit
$G/K$ has codimension 2. We  start with the metric on the slice $V\simeq\R^2\simeq\C^2$.  If
$\Ko=\{e^{i\theta}\mid 0\le \theta\le
 2\pi\}$, the action on the slice is given by multiplication with $e^{ik\theta}$ since $|H\cap K^0|=k$.
 If we let $Z=\frac{d}{d\theta}\in\fk$ and $f(t)=|Z^*(c(t))|^2$, the usual proof for the smoothness of a metric in polar coordinates shows that

\begin{lem}\label{slice} With the notation defined above, the
metric on $V$ is smooth if and only if $f(t)=k^2 t^2 +t^4 \phi(t^2)$
for some smooth function $\phi$.
\end{lem}

Let $\fg=\fm\oplus\fk$ and $\fk=\fh\oplus\fp$ be $Q$ orthogonal
decompositions. Notice that $\dim\fp=1$ since $K/H$ is one
dimensional. Furthermore, since $\Ko=\S^1$, the $\Ko$ irreducible
modules in $\fm$ are either trivial or two dimensional. In the case
of a trivial module we have:
\begin{lem}\label{trivial} Let $X$ be a vector in a one dimensional
subspace $\fm_0$ of $\fm$ on which $\Ko$ acts trivially, and
$Z\in\fp$ as above. If the representation of  $H$ on $\fm_0$ and
$\fp$ are equivalent, then the metric is smooth if and only if, in
addition to $f(t)=|Z^*|_{c(t)}^2=k^2 t^2 +t^4 \phi_1(t^2)$, we have
$$|X^*|_{c(t)}^2=\phi_2(t^2),\; \text{ and } \ml Z^*,
X^*\mr_{c(t)} = t^2\phi_3(t^2)$$ for some smooth functions $\phi_i$.
If the representation of $H$ on $\fm_0$ and $\fp$ are inequivalent,
we have  $\ml Z^*, X^*\mr_{c(t)}=0$.
\end{lem}

Indeed, since $\Ko$ fixes $X$, the inner products are   even
functions of $t$. Furthermore, $Z^*$ and $X^*$ are
orthogonal at $c(0)$ since the slice $V$ is orthogonal to the
singular orbit $G/K$. The proof now is similar to the proof of \lref{slice}.

\smallskip

 If $\fm_1\subset\fm$ is a 2-dimensional $\Ko$ irreducible module,
 invariant under $H$,
  we identify $\fm_1\simeq \C$, in which case the
action of $\Ko$ will be given by $z\to e^{i d \theta} z$ for some
$d\in \Z$. By possibly changing the
order of the basis in $\fm_1$ and $\fp$ if necessary, we may assume
that $d>0, k>0$.  In the natural basis of $\C$ we let
$g_{11},g_{22},g_{12}$ be the inner products of the basis vectors
along $c(t)$.

\begin{lem}\label{onemod} With the notation defined above, the
restriction of the metric to the $K$ irreducible module $\fm_1$
admits a smooth extension to the singular orbit if and only if $k$
divides $2d$ and
$$(g_{11}-g_{22})(t)=t^{2 \frac dk}
\phi_1(t^2),\qquad g_{12}(t)=t^{2 \frac dk} \phi_2(t^2),
 \qquad (g_{11}+g_{22})(t)=\phi_3(t^2)$$
where $\phi_i(t)$ are smooth functions. When the
exponent in $t$ is a fraction, the right hand side should be set
to $0$.
\end{lem}
\begin{proof} We extend the inner products $g_{ij}$ to functions
along the slice $V$.
 Let $P$ be the restriction
of the metric tensor to $\fm_1$ and $R(\theta)$ represent a rotation
by $\theta$. Since $e^{i\theta}\in \Ko$ acts by $e^{ik \theta}$ on
the slice $V\simeq\C$ and by  $e^{id \theta}$ on $\fm_1$,  the $\Ko$
invariance of the metric can be written in matrix form:
$$
P(e^{ ik\theta} p)= R(d\theta)P(p)R(-d\theta),\; p\in V
$$
In other words,
$$\left\{\begin{array}{l}
g_{11}(e^{ik \theta} p)=\cos^2(d \theta)\,g_{11}(p)+\sin^2(d \theta)\,g_{22}(p)-2\sin(d\theta)\cos(d\theta)\,g_{12}(p)\\
g_{12}(e^{ik \theta} p)=(\cos^2(d \theta)-\sin^2(d \theta))\,g_{12}(p)-\sin(d\theta)\cos(d\theta)\,(g_{22}(p)-g_{11}(p))\\
g_{22}(e^{ik \theta} p)=\sin^2(d \theta)\,g_{11}(p)+\cos^2(d \theta)\,g_{22}(p)+2\sin(d\theta)\cos(d\theta)\,g_{12}(p)\\
\end{array}\right.$$
If we set $w=(g_{11}-g_{22})+2 i g_{12}$, then one easily shows that
the above equations are equivalent to:
$$\left\{\begin{array}{l}
(g_{11}+g_{22})(e^{k i\theta} p)=(g_{11}+g_{22})(p))\\
w(e^{k i\theta} p)=e^{2 d i \theta} w(p)
\end{array}\right.$$
Notice that if $w$ vanishes identically, this implies that the
metric is smooth if and only if $g_{11}=g_{22}$ is even. If not, the
second equation shows that $w$ is only well defined when $k$ divides
$2d$. It then reduces to
$$w(t e^{i\theta})=e^{2 \frac dk i \theta} w(t),\ t\in\R$$
or equivalently
$$z^{-2 \frac dk} w(z)=t^{-2 \frac dk} w(t).$$
If $g(z)=z^{-2 \frac dk} w(z)$, it follows  that $g(z)$ and
$g_{11}+g_{22}$ are $K$-invariant functions on $V$. Such
functions  admit a smooth extension to the origin if and only if they are even and thus
$$w(z)=z^{2 \frac dk}( \phi_1({t}^2) + i  \phi_2({t}^2)),\qquad (g_{11}+g_{22})(z)=\phi_3({t}^2)$$
Separating the real and the imaginary part and restricting to the
normal geodesic proves our claim.
\end{proof}

\bigskip

Next we deal with the case of two  irreducible modules
$\fm_1\simeq\C$ and $\fm_2\simeq\C$ under the action of $\Ko$, whose
restriction to $H$ are equivalent. Inner products between vectors in
$\fm_1$ and $\fm_2$ are thus not necessarily 0. We choose bases
$\{v_1,w_1\}$ of $\fm_1$ and $\{v_2,w_2\}$ of $\fm_2$ such that the
action of $\Ko$ on $\fm_1$ is given by $z\to e^{i d \theta} z$ and
on $\fm_2$ by $z\to e^{i d' \theta} z$ for some $d,d'\in \Z$, and we
can  again assume that $d,d',k$ are all positive. The inner products
between $\fm_1$ and $\fm_2$ are then determined by
$h_{11}=g(v_1,v_2)$, $h_{12}=g(v_1,w_2)$, $h_{21}=g(w_1,v_2)$ and
$h_{22}=g(w_1,w_2)$ along $c(t)$. As in the proof of \lref{onemod}, one easily shows:

\begin{lem}\label{twomod} With the notation as above, the scalar products
between elements of  $\fm_1$ and $\fm_2$ admit a smooth extension to
the singular orbit if and only if $k$ divides $d\pm d'$ and
$$(h_{11}+h_{22})(t)=t^{\frac {d'-d}k} \phi_1(t^2),
\qquad (h_{11}-h_{22})(t)=t^{\frac {d'+d}k} \phi_2(t^2)$$
$$(h_{12}-h_{21})(t)=t^{\frac {d'-d}k} \phi_3(t^2),
\qquad (h_{12}+h_{21})(t)=t^{\frac {d'+d}k} \phi_4(t^2)$$
where $\phi_i(t), i=1,\ldots,4$, are smooth real functions. When the
exponent in $t$ is a fraction, the right hand side should be set
to $0$.
\end{lem}

This sequence of lemmas deals with the general situation of a
singular orbit of codimension 2. We now specialize to our situation
with $G=\S^3\times\S^3$. Here we have, in terms of the basis
$X_i,Y_i$ of $\fg$, irreducible modules $\fm_1=\{X_2,X_3\}$ and
$\fm_2=\{Y_2,Y_3\}$, a trivial module $\fm_0$ spanned by
$W=-qX_1+pY_1$ and $\fp=\fk$ spanned by $Z=pX_1+qY_1$.

Applying \lref{slice} and \lref{trivial} (notice that $H$ acts the
same on $\fp$ and $\fm_0$) we get:

\begin{align*}
|Z^*|^2&=p^2\,f_1+q^2\,g_1+2pq\, h_1=
k^2 t^2+t^4\,\phi_1(t^2)\\
\ml Z^*,W^*\mr&=-p q\,f_1+pq\,g_1+(p^2-q^2)\,h_1=
t^2\phi_2(t^2)\\
|W^*|^2&=q^2\,f_1+p^2\,g_1-2pq\, h_1=\phi_3(t^2)
\end{align*}
This says in particular that $f_1,g_1,h_1$ must be even. The
equations for the values of the functions and their first and second
derivative at $t=0$ can be solved and give rise to the conditions in
\tref{gen smooth} (a).

On $\fm_1$ the isotropy group $\Ko$ acts by rotation $R(2p\theta)$
and on $\fm_2$  by  $R(2q\theta)$. Furthermore, the modules $\fm_1$
and $\fm_2$ are equivalent to each other under the action of $H$.
\tref{gen smooth} (b) then follows by applying \lref{onemod} and
\lref{twomod}. Notice that in our situation
$g_{12}=h_{12}=h_{21}=0$, as required by $H$ invariance of the
metric. This finishes the Proof of \tref{gen smooth}.
\end{proof}

\bigskip

\begin{center}
 \emph{ Curvature tensor of metrics on $P_k$ and $Q_k$}
 \end{center}

\bigskip

The following gives the formula for  the curvature tensor of a
general \coo\ metric on $P_k$ or $Q_k$ (and $R$).

\begin{thm}\label{gencurv}
A cohomogeneity one metric defined by $(f_i,g_i,h_i)$ with
$D_i=f_ig_i-h_i^2$, has the following components of the curvature
tensor, all others being $0$.
\begin{eqnarray*}
&R_{X_iY_iX_iY_i}=&-\frac 14 (f_i'g_i'-{h_i'}^2).\\
&R_{X_iY_iX_jY_j}=&-h_k- \frac 1{D_k} \left\{h_ih_j(f_k+g_k)+
h_k(f_ig_j+f_jg_i)-h_k(D_i+D_j)\right\}.\\
&R_{X_iX_jX_iX_j}=&2f_i+2f_j-3 f_k-\frac 14 f_i'f_j'
+\frac1{D_k}g_k(f_i-f_j)^2.\\
&R_{X_iX_jX_iY_j}=&h_j-\frac 14 f_i'h_j'-
\frac1{D_k}(f_i-f_j)(h_jg_k+h_ih_k).\\
&R_{X_iX_jY_iY_j}=&-2h_k-\frac 14 h_i'h_j'-
\frac 1{D_k}\left\{h_ih_j(f_k+g_k)+h_k(h_i^2+h_j^2)\right\}.\\
&R_{X_iY_jX_iY_j}=&-\frac 14 f_i'g_j'+
\frac 1{D_k}\left\{h_i^2f_k+h_j^2g_k+2h_ih_jh_k\right\}.\\
&R_{X_iY_jX_jY_i}=&h_k+\frac 14 h_i'h_j'+
\frac1{D_k} h_k(f_i-f_j)(g_i-g_j).\\
&R_{Y_iY_jX_iY_j}=&h_i-\frac 14 h_i'g_j'+
\frac1{D_k}(h_jh_k+h_if_k)(g_i-g_j).\\
&R_{Y_iY_jY_iY_j}=&2 g_i+2 g_j-3 g_k-
\frac 14 g_i'g_j'+\frac1{D_k}f_k(g_i-g_j)^2.\\
& R_{X_iX_jX_kT}=&\frac 12 f_i'+\frac 12 f_j'-f_k'
+\frac 1{2D_i}(f_j-f_k)(g_if_i'-h_ih_i')-
\frac 1{2D_j}(f_i-f_k)(g_jf_j'-h_jh_j').\\
& R_{X_iX_jY_kT}=&-h_k'+\frac 1{2D_i}\left\{h_j(f_ih_i'-h_if_i')-
h_k(g_if_i'-h_ih_i')\right\}\\
&&\hspace{150pt} - \frac 1{2D_j}\left\{h_i(f_jh_j'-h_jf_j')-
h_k(g_jf_j'-h_jh_j')\right\}.\\
& R_{X_iY_jX_kT}=&\frac 12 h_j'
+\frac 1{2D_i}\left\{h_j(g_if_i'-h_ih_i')-
h_k(f_ih_i'-h_if_i')\right\}-
\frac 1{2D_j}(f_i-f_k)(g_jh_j'-h_jg_j')\\
& R_{X_iY_jY_kT}=&\frac 12 h_i'
 + \frac 1{2D_i}(g_j-g_k)(f_ih_i'-h_if_i')-
\frac 1{2D_j} \left\{h_i(f_jg_j'-h_jh_j')-h_k(g_jh_j'-
h_jg_j')\right\}\\
& R_{Y_iY_jX_kT}=&-h_k'+\frac 1{2D_i} \left\{h_j(g_ih_i'-h_ig_i')-
h_k(f_ig_i'-h_ih_i')\right\} \\
&&\hspace{150pt} -\frac 1{2D_j}
\left\{h_i(g_jh_j'-h_jg_j')-
h_k(f_jg_j'-h_jh_j')\right\} .\\
& R_{Y_iY_jY_kT}=&\frac 12 g_i'+\frac 12 g_j'-g_k'
+  \frac 1{2D_i}(g_j-g_k)(f_ig_i'-h_ih_i')-
\frac 1{2D_j} (g_i-g_k)(f_jg_j'-h_jh_j')\\
&R_{X_iTX_iT}=&-\frac 12 f_i''+
\frac 1{4D_i}\left\{g_i{f_i'}^2+f_i{h_i'}^2-2h_if_i'h_i'\right\}.\\
&R_{X_iTY_iT}=&-\frac 12 h_i''+
\frac 1{4D_i}\left\{g_if_i'h_i'+
f_ih_i'g_i'-h_if_i'g_i'-h_i{h_i'}^2\right\}.\\
&R_{Y_iTY_iT}=&-\frac 12 g_i''+
\frac 1{4D_i}\left\{g_i{h_i'}^2+f_i{g_i'}^2-2h_ih_i'g_i'\right\}.\\
\end{eqnarray*}
with $(i,j,k)$ is a cyclic permutation of $(1,2,3)$.
\end{thm}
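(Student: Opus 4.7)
The plan is to compute the curvature tensor directly by combining the Koszul formula, the Gauss--Codazzi equations for the principal orbits, and the Riccati equation along the normal geodesic $c(t)$. The metric on each principal orbit $G/H$ is encoded in the symmetric positive $\Ad(H)$-equivariant operator $P_t\colon\fm\to\fm$ with $\ml A^*,B^*\mr_{c(t)}=Q(P_tA,B)$. In the $Q$-orthonormal basis $\{X_i,Y_i\}$ of $\fg=\fsu(2)\oplus\fsu(2)$, the hypothesis that the only nonzero inner products along $c$ are $f_i,g_i,h_i$ forces $P_t$ to be block-diagonal in the pairs $(X_i,Y_i)$, with blocks $\left(\begin{smallmatrix} f_i & h_i \\ h_i & g_i \end{smallmatrix}\right)$. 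Inverting these $2\times 2$ blocks produces exactly the determinants $D_i=f_ig_i-h_i^2$, which is the source of the systematic appearance of $D_i$ in the denominators.

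Next I would assemble the Levi-Civita connection in the frame $\{T,X_i^*,Y_i^*\}$. Differentiating $\ml A^*,B^*\mr=Q(P_tA,B)$ along $c$ gives the shape operator $S=-\tfrac12 P_t^{-1}P_t'$ of the principal orbit and, equivalently, $\nabla_T A^*=\tfrac12(P_t^{-1}P_t'A)^*$. The tangential covariant derivatives $\nabla_{A^*}B^*$ come from the Koszul formula applied to action fields, using the brackets $[X_i,X_j]=2X_k$, $[Y_i,Y_j]=2Y_k$ for cyclic $(i,j,k)$ and $[X_i,Y_j]=0$, and then inverting the relevant $2\times 2$ block to re-express the result in the $\{X_i^*,Y_i^*\}$ basis.

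With the connection in hand, the curvature components split naturally according to the number of $T$ slots. The purely tangential components $R_{ABCD}$ follow from the Gauss equation: the intrinsic curvature of the invariant metric $g_t$ on $G/H$, computed by Nomizu's formula for invariant metrics with finite isotropy, plus the shape-operator correction $\ml SA,C\mr\ml SB,D\mr-\ml SA,D\mr\ml SB,C\mr$. The mixed components $R_{ABCT}$ follow from the Codazzi--Mainardi equation, which here reduces to expressions of the form $\ml(\nabla_T S)A\cdot B- (\nabla_T S)B\cdot A,C\mr$; this is where terms linear in the first derivatives $f_i',g_i',h_i'$ enter. The radial components $R_{ATBT}$ come from the Riccati equation $R(\cdot,T)T=-\nabla_T S - S^2$, producing the second derivatives $f_i'',g_i'',h_i''$ together with the quadratic combinations of first derivatives, again weighted by the block inverse and hence by $1/D_i$.

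The main obstacle is not conceptual but combinatorial: keeping careful track of signs, cyclic permutations, and the propagation of the coupling $h_i$ inside each $(X_i,Y_i)$ block through all three curvature equations, in particular in mixed terms such as $R_{X_iY_jX_kT}$ which receive contributions from two distinct blocks of $P_t^{-1}P_t'$. Once the block structure is used to reduce every matrix inversion to a $2\times 2$ inversion and the brackets are pre-tabulated, each listed component can be verified in a few lines. All unlisted components vanish either because they contain an odd number of factors from a given sign-changing element of $\Ad(H)$ (so invariance forces them to zero) or because inner products of the form $\ml X_i^*,X_j^*\mr$, $\ml X_i^*,Y_j^*\mr$, $\ml Y_i^*,Y_j^*\mr$ with $i\ne j$ are identically zero by assumption, a condition preserved under all three curvature equations.
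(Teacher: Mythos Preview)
Your overall strategy is sound and in fact coincides with the paper's: the curvature formulas it quotes from \cite{GZ2} are precisely the Gauss, Codazzi and Riccati equations for a cohomogeneity one metric, repackaged Lie-theoretically in terms of $P$, $P'$, $P''$ and the bilinear maps $B_\pm(X,Y)=\tfrac12([X,PY]\mp[PX,Y])$. The paper then simply tabulates $P$, $P^{-1}$ and $B_\pm$ on the basis $X_i,Y_i$ (using $[X_i,X_j]=2X_k$, $[Y_i,Y_j]=2Y_k$, $[X_i,Y_j]=0$) and substitutes. Your proposal to rederive those formulas from scratch rather than cite them is more work but not a genuinely different method.

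There is, however, one real slip. The Codazzi--Mainardi equation is
\[
\langle R(A^*,B^*)C^*,T\rangle=(\nabla_{A^*}^{\text{orbit}}\II)(B^*,C^*)-(\nabla_{B^*}^{\text{orbit}}\II)(A^*,C^*),
\]
with the \emph{tangential} covariant derivative of $\II$ along the orbit, not $\nabla_T S$ as you wrote. Differentiating $S=-\tfrac12 P^{-1}P'$ in the $T$ direction would introduce $P''$, i.e.\ second derivatives of $f_i,g_i,h_i$, contradicting your own (correct) remark that only first derivatives occur in $R_{ABCT}$; and since $\nabla_T S$ is symmetric, the antisymmetrized expression you wrote down would in any case vanish. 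In the cohomogeneity one setting the tangential derivatives of $\II$ are purely algebraic: because $\II(B^*,C^*)=-\tfrac12 Q(P'B,C)$ is constant along each orbit, what survives are the terms $-\II(\nabla^{\text{orbit}}_{A^*}B^*,C^*)-\II(B^*,\nabla^{\text{orbit}}_{A^*}C^*)$ and their $A\leftrightarrow B$ counterparts, computed from the homogeneous connection on $G/H$ via the brackets. This is exactly the source of the $[X,Y]$ and $B_+$ contributions in the \cite{GZ2} formula for $g(R(X,Y)Z,T)$. Once you correct this step the rest of your outline goes through.
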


\begin{proof}
We will use the following curvature formulas for a cohomogeneity one
metric (see \cite{GZ2}):

\begin{align}
g(R(X,Y)Z,W) = &- \tfrac{1}{2}Q(B_-(X,Y),[Z,W]) - \tfrac{1}{2}Q([X,Y],B_-(Z,W))
\notag \\
&+\tfrac{1}{2}Q(P[X,Y]_{\fn},[Z,W]_{\fn}) +
\tfrac{1}{4}Q(P[X,Z]_{\fn},[Y,W]_{\fn})\notag \\
&-\tfrac{1}{4}Q(P[X,W]_{\fn},[Y,Z]_{\fn})\notag \\
& + Q(B_+(X,Z),P^{-1}B_+(Y,W)) - Q(B_+(X,W),P^{-1}B_+(Y,Z)) \notag \\
& + \tfrac{1}{4}Q(P'X,Z)Q(P'Y,W) - \tfrac{1}{4}Q(P'X,W)Q(P'Y,Z) \notag \\
g(R(X,Y)Z,T) = \: &\tfrac{1}{2} Q([X,Y],P'Z) - \tfrac{1}{4}
Q([Z,X],P'Y)
  - \tfrac{1}{4}Q([Y,Z],P'X) \notag \\
&- \tfrac{1}{2}Q(P'X,P^{-1}B_+(Y,Z)) +
\tfrac{1}{2}Q(P'Y,P^{-1}B_+(Z,X))\notag \\
g(R(X,T)T,Y) =\: &Q((-\tfrac{1}{2}P'' +
\tfrac{1}{4}P'P^{-1}P')X,Y)\notag
\end{align}

\no where $P$ defines the metric via $g(X^*,Y^*) = Q(P(X),Y)$ and
$B_{\pm}(X,Y) = \frac{1}{2}([X,PY] \mp [PX,Y])$.

For our metrics we have
$$
PX_i = f_iX_i + h_iY_i\ , \ PY_i = h_iX_i + g_iY_i
$$
and thus
$$
P^{-1}X_i = \frac{1}{D_i}(g_iX_i - h_iY_i) \ , \
P^{-1}Y_i = \frac{1}{D_i}(-h_iX_i + fY_i)
$$

\no  Since  $[X_i,X_j]=2 X_k \ ,\ [Y_i,Y_j]=2 Y_k $, one has
\begin{align*}
B_{\pm}(X_i,X_i) &=  B_{\pm}(Y_i,Y_i) = B_{\pm}(X_i,Y_i) = 0\\
B_{\pm}(X_i,X_j) &= (f_j \mp f_i)X_k\ , \
B_{\pm}(Y_i,Y_j) = (g_j \mp g_i)Y_k \ , \
B_{\pm}(X_i,Y_j) = (h_jX_k \mp h_iY_k)
\end{align*}
with $(i,j,k)$ cyclic. The formulas for the curvature tensor now
easily follow by substituting.
\end{proof}

Using these formulas, one also easily derives the curvature formulas
for a connection metric in \tref{curv conn}. We illustrate the
procedure in one particular case, the others being similar.

\bigskip

{\it Proof of \tref{curv conn}:}\ \ We will show that $\ml R( X_i^*
,\bar{Z}_j)\bar{Z}_k,T \mr=\e B_{ij}$.

 Notice that since $V_i=Y_i^*-h_iX_i^*=v_i\bar{Z}_i$  are not action fields, \cite{GZ2} can not be applied
directly. By expansion, we have:
\begin{align*}
 v_jv_k &\ml R( X_i^* ,\bar{Z}_j)\bar{Z}_k,T \mr=\ml R( X_i^* ,V_j)V_k,T \mr
= \langle R( X_i^* ,Y_j^*-h_j X_j^*)(Y_k^*-
h_kX_k^*),T \rangle \\
&=  \langle R( X_i^* ,Y_j^*)Y_k^*,T \rangle  -
h_j \langle R( X_i^*,X_j^*)Y_k^*,T \rangle -
h_k \langle R( X_i^* ,Y_j^*)X_k^*),T \rangle \\
&\quad +h_jh_k
\langle R(  X_i^*  ,X_j^* )X_k^* ,T \rangle
\end{align*}
We now use the curvature formulas from \tref{gencurv}, where we
replace $f_i$ by $\e$, $h_i$ by $\e h_i$ and $g_i$ by $v_i^2+\e
h_i^2$, as discussed in \eqref{scaling} for a scaled connection
metric. We thus have
\begin{align*}
&\langle R(  X_i^*  , Y_j^* )Y_k^* ,T \rangle=\frac{\epsilon}{2}\left(
\frac{v_i^2+v_k^2-v_j^2}{v_i^2}h_i' +h_k h_j' -2(h_i+h_jh_k)\frac{
v_j'}{v_j}\right)\\
&\hspace{100pt} +\frac{\epsilon^2}{2}\left(\frac {(h_k^2-h_j^2) h_i'}{ v_i^2}- \frac
{h_j h_j' (h_i+h_jh_k) }{ v_j^2}\right)\\
 &\langle R(  X_i^*  ,
X_j^* )Y_k^* ,T \rangle=-\epsilon h_k'-\frac{\epsilon^2}{2}\left(\frac{ h_i'}{
v_i^2}(h_j+h_ih_k) +\frac{h_j'}{ v_j^2}(h_i+h_jh_k)\right)\\
&\langle R(  X_i^*  , Y_j^* )X_k^* ,T \rangle=\epsilon \frac {h_j'}{2}
+\frac{\epsilon^2}{2} \frac{ h_i'}{  v_i^2}(h_k+h_ih_j)\\
&\langle R(  X_i^*  ,X_j^* )X_k^* ,T \rangle=0
\end{align*}

Combining these:

\begin{eqnarray*}
 v_jv_k \ml R( X_i^* ,\bar{Z}_j)\bar{Z}_k,T \mr&=&\frac{\epsilon}{2}\left(
\frac{v_i^2+v_k^2-v_j^2}{v_i^2}h_i'
+ h_k h_j'- 2(h_i+h_jh_k)\frac{
v_j'}{v_j}+2h_j h_k'-h_kh_j'\right) \\
&+&\frac{\epsilon^2}{2}\left(\frac {(h_k^2-h_j^2) h_i'}{ v_i^2}- \frac
{h_j h_j' (h_i+h_jh_k) }{ v_j^2}\right. \\
&&\hspace{15pt}+\left. h_j\frac{ h_i'}{
v_i^2}(h_j+h_ih_k) +h_j\frac{h_j'}{ v_j^2}(h_i+h_jh_k)
 -h_k\frac{ h_i'}{  v_i^2}(h_k+h_ih_j) \right)
\end{eqnarray*}
and thus
$$v_jv_k\ml R( X_i^* ,\bar{Z}_j)\bar{Z}_k,T \mr=
 \e ( h_jh_k'
 +\frac{v_k^2+v_i^2-v_j^2}{2v_i^2}h_i'-
\frac{v_j'}{v_j}(h_i+h_jh_k))=\e v_jv_k B_{ij}$$ which shows that
$\ml R( X_i^* ,\bar{Z}_j)\bar{Z}_k,T \mr=\e B_{ij}$.

\bigskip

\bigskip

We finally indicate how to prove the curvature formulas in
\eqref{curvbase} for the metric on the base. In this case the metric
is diagonal $PW^*_i = v_i^2 W^*_i$ and thus $B_{\pm}(W_i,W_i)  = 0$
and $B_{\pm}(W_i,W_j) =  (v_j^2 \mp v_i^2) W_k$, from which
\eqref{curvbase} easily follows as in the proof of \tref{gencurv}.

 \providecommand{\bysame}{\leavevmode\hbox
to3em{\hrulefill}\thinspace}

\end{document}